\newcommand{\aut}{\textrm{Aut}}
\newcommand{\id}{\text{id}}
\newcommand{\mcg}{\text{MCG}}
 \newcommand{\Z}{\mathbb{Z}}
\newcommand{\N}{\mathbb{N}}
\newcommand{\per}{\mathcal{P}}
\newcommand{\F}{\mathscr{F}}
\newtheorem{theorem}{Theorem}[section]
\newtheorem{definition}[theorem]{Definition}
\newtheorem{corollary}[theorem]{Corollary}
\newtheorem{lemma}[theorem]{Lemma}
\theoremstyle{remark}
\newtheorem{remark}[theorem]{Remark}
\titleformat{\section}{\normalfont\normalsize\bfseries}{\thesection}{1em}{}
\titleformat{\subsection}{\normalfont\normalsize}{\thesubsection}{1em}{}
\titleformat{\subsubsection}{\normalfont\normalsize}{\thesubsubsection}{1em}{}
\def\blfootnote{\xdef\@thefnmark{}\@footnotetext}
\author{Danyu Zhang}
\email{zhang.8939@osu.edu}
\begin{document}

% Header
\noindent\begin{tabular}{@{}p{\linewidth}@{}}
  \centering\LARGE Anosov Diffeomorphisms on a Product of Surfaces\\
  \centering\large Danyu Zhang\\
%  \centering\large \\
%  \large\today\\
\end{tabular}

\renewcommand{\arraystretch}{1.5} % Spread titles out a tad...

\setcounter{tocdepth}{1}
%\noindent\tableofcontents

\begin{abstract}
   We show that there is no transitive Anosov diffeomorphism with the global product structure, which is homotopic to a product of pseudo-Anosov diffeomorphisms, on a product of two closed surfaces each of which has genus greater than or equal to two.
\end{abstract}
\makeatletter
\@setabstract
\makeatother

\section{Introduction}

\noindent Let $f:M\to M$ be a $C^1$ diffeomorphism. If there exist an invariant splitting $TM=E^s\oplus E^u$ under $df$, a Riemannian metric $\|\cdot\|$ on $M$, and constants $C>0,\lambda\in (0,1)$, such that
$$\|df^nv^s\|\leq C\lambda^n\|v^s\|,\ \ \ \text{for } v^s\in E^s,$$
$$\|df^{-n}v^u\|\leq C\lambda^n\|v^u\|,\ \ \ \text{for } v^u\in E^u,$$
then we call $f$ an Anosov diffeomorphism.

Smale has aksed \cite{smale-problems} if all Anosov diffeomorphisms live on up to a finite cover a nilmanifold. Yano \cite{yano} has showed that there are no transitive Anosov diffeomorphisms on negatively curved manifolds. Gogolev and Lafont \cite{gog-laf} proved that a product $M_1\times \cdots\times M_k\times N$ where $M_i,\ i=1,\ldots,k$ is a closed negatively curved manifold of dimension $\geq 3$ and $N$ is a nilmanifold does not admit transitive Anosov diffeomorphisms. Neofytidis \cite{neof-4mflds} showed that any $4$-manifolds which are not finitely covered by a product of closed surfaces $S_g\times S_h$ of genus $g,h$ where $g\geq 2$ \emph{or} $h\geq 2$ do not admit transitive Anosov diffeomorphisms.

Consider the pair of foliations of an Anosov diffeomorphism in the universal cover, if each pair of the stable and unstable leaves intersects only once, then we say that the Anosov diffeomorphism has the \emph{global product structure}.
Hammerlindl \cite{hamm} showed that an Anosov diffeomorphism with the polynomial global product structure is topologically conjugate to an infranilmanifold automorphism. The polynomial global product structure implies that the fundamental group has polynomial growth.

Here, we want to partially answer the question asked in \cite{gog-laf}, whether there is an Anosov diffeomorphism on a product of two hyperbolic surfaces.

\begin{theorem}\label{maintheorem} There is no transitive Anosov diffeomorphism with the global product structure on a product of two closed hyperbolic surfaces, which is homotopic to a product of pseudo-Anosov diffeomorphisms.
\end{theorem}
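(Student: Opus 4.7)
The plan is to use the global product structure to impose rigid product coordinates on the universal cover, to tie these coordinates to the pseudo-Anosov factors via Handel's shadowing, and then to extract a contradiction from the prong singularities of the pseudo-Anosov foliations, which cannot be matched by smooth non-singular Anosov foliations.

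First, I would pass to the universal cover $\widetilde M = \mathbb{H}^2 \times \mathbb{H}^2$ with deck group $\Gamma = \pi_1(S_g) \times \pi_1(S_h)$ and choose compatible lifts $\widetilde f$ of $f$ and $\widetilde F = \widetilde f_1 \times \widetilde f_2$ of $f_1 \times f_2$. The homotopy $f \simeq f_1 \times f_2$ allows arranging $d(\widetilde f(x), \widetilde F(x)) \leq C$ uniformly on $\widetilde M$, with both lifts inducing $(f_1)_\ast \times (f_2)_\ast$ on $\Gamma$. Fixing a basepoint $\widetilde p$ and writing $\widetilde W^s_0, \widetilde W^u_0 \cong \mathbb{R}^2$ for its stable/unstable leaves, the global product structure yields a homeomorphism $\widetilde M \cong \widetilde W^s_0 \times \widetilde W^u_0$ in which any homeomorphism of $\widetilde M$ preserving both Anosov foliations is forced to factor as a product of two homeomorphisms of $\mathbb{R}^2$. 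In particular $\widetilde f(a,b) = (\widetilde f_s(a), \widetilde f_u(b))$, with $\widetilde f_s$ contracting and $\widetilde f_u$ expanding, and each $\gamma \in \Gamma$ acts as $\gamma\cdot(a,b) = (\rho_s(\gamma)a, \rho_u(\gamma)b)$ for homomorphisms $\rho_s, \rho_u : \Gamma \to \mathrm{Homeo}(\mathbb{R}^2)$ that are equivariant with respect to $(f_1)_\ast \times (f_2)_\ast$.

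Next, I would relate the GPS factors to the pseudo-Anosov dynamics via Handel-type semi-conjugacies. Using the bounded displacement $d(\widetilde f, \widetilde F) \leq C$ together with the hyperbolicity of each $\widetilde f_i$, I would construct $\Gamma$-equivariant continuous maps $h_i : \widetilde M \to \mathbb{H}^2$ satisfying $h_i \circ \widetilde f = \widetilde f_i \circ h_i$ in the spirit of \cite{handel_1985}. Invoking the product factorization of $\widetilde f$ from the first step, the $h_i$ should descend to maps from the GPS factors $\widetilde W^u_0$ and $\widetilde W^s_0$ onto the corresponding $\mathbb{H}^2$-factors, intertwining the hyperbolic dynamics on each side. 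This identifies each leaf of the Anosov foliations of $\widetilde f$, up to bounded error, with a leaf of the singular pseudo-Anosov foliations in one $\mathbb{H}^2$ factor.

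The contradiction then comes from the prong singularities of the pseudo-Anosov foliations. Since each genus is $\geq 2$, each pseudo-Anosov $f_i$ must have at least one singular point whose foliation has $\geq 3$ prongs, giving distinguished $\pi_1(S_{g_i})$-orbits of prong endpoints on $\partial \mathbb{H}^2$. Pulling this data back via the semi-conjugacies forces the smooth, embedded-plane leaves of the Anosov foliations to replicate the prong/endpoint structure at infinity, which is only possible if $\rho_s$ or $\rho_u$ has a non-trivial kernel on one of the two surface-group factors of $\Gamma$. But a non-trivial element in such a kernel has a fixed set in $\widetilde M$, contradicting the freeness of the deck action. The main obstacle is the second step: Handel's theorem is stated for a single surface, so producing the semi-conjugacies on the four-manifold $\widetilde M$ and then showing that they respect the GPS product decomposition of $\widetilde f$ and the literal product decomposition of $\widetilde F$ simultaneously is the technical heart of the argument, and is likely to rely on the hyperbolic contraction/expansion of both $\widetilde f$ and the $\widetilde f_i$ in a delicate iteration.
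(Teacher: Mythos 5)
Your overall strategy diverges substantially from the paper's, and the step where you claim a contradiction contains a genuine gap. The paper does not attempt to factor the Anosov map through its GPS product coordinates, nor does it argue via prong singularities and boundary data. Instead, it first proves a Handel-type theorem (Theorem~\ref{main-handel}) producing a closed $G$-invariant set $Y\subseteq M$ and a semi-conjugacy $\varphi:Y\to M$ onto $F=f_1\times f_2$, and then uses the GPS in a completely different way: if $\varphi$ were non-injective, say $\tilde\varphi(\tilde y_1)=\tilde\varphi(\tilde y_2)$, one takes the (unique, by GPS) heteroclinic intersection $\tilde z$ of the unstable leaf of $\tilde y_1$ with the stable leaf of $\tilde y_2$; all forward iterates of $\tilde y_1,\tilde z$ map to the same $\widetilde F$-orbit and hence lie in the $\tilde\varphi$-preimage of a compact set, but Remark~\ref{bound} bounds the leaf-distance uniformly over such a compact set while the Anosov expansion makes $d_u(\widetilde G^k\tilde y_1,\widetilde G^k\tilde z)$ unbounded. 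That tension is the contradiction.

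Your final step is where the argument breaks. You assert that replicating the prong structure at infinity forces a non-trivial kernel of $\rho_s$ or $\rho_u$ on one of the surface-group factors, and that such a kernel element ``has a fixed set in $\widetilde M$, contradicting the freeness of the deck action.'' Neither implication holds as stated. First, it is not explained why matching endpoints of singular leaves at infinity would force a kernel; nothing about the topology of singular foliations pushed to $\partial\mathbb{H}^2$ obviously constrains the leaf-space representations this way. Second, even granting a kernel, an element $\gamma\in\ker\rho_s$ merely acts trivially on the second GPS coordinate, so in your product coordinates $\gamma(a,b)=(\gamma_s(a),b)$; since $\gamma_s$ can be a fixed-point-free homeomorphism of $\widetilde W^s_0\cong\mathbb{R}^2$ (e.g.\ conjugate to a translation), $\gamma$ need not fix any point of $\widetilde M$. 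So freeness of the deck action is not contradicted. You also acknowledge the main technical gap yourself: constructing the maps $h_i:\widetilde M\to\mathbb{H}^2$ and showing they simultaneously respect the GPS factorization of $\widetilde f$ and the literal product $\widetilde F=\widetilde f_1\times\widetilde f_2$ is nontrivial, and the paper sidesteps this entirely by working with a single map $\varphi:Y\to M$ rather than factor-wise semi-conjugacies. The leaf-distance compactness argument via Remark~\ref{bound} is the missing idea you would need, and it replaces the prong-singularity argument rather than supplementing it.
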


We want to proceed by contradiction. Let $S_1$, $S_2$ denote the two hyperbolic surfaces, and $M:= S_1\times S_2$. Suppose $G:M\to M$ is a transitive Anosov diffeomorphism with the global product structure.

We follow the following steps.

\begin{itemize}
  \item[1.] Reduce $G$ to a product of self-diffeomorphisms $f_1, f_2$ of each surfaces such that
  $$G_\#=(f_1)_\#\times (f_2)_\#:\pi_1(S_1)\times \pi_1(S_2)\to \pi_1(S_1)\times \pi_1(S_2).$$
  Note that then $G$ is homotopic to $f_1\times f_2=: F$.
  \item[2.] Establish Handel \cite{handel_1985} for $f_1\times f_2$ assuming $f_1,f_2$ are pseudo-Anosov, i.e., we show that there exists a closed $G$-invairant subset $Y$ and a continuous surjective map $\varphi:Y\to M$, such that $\varphi\circ G|_Y=F\circ \varphi$.
  \item[3.] Show that $\varphi$ is a homeomorphism by adjusting the argument of Handel \cite{handel_1985} for the Anosov diffeomorphism.
\end{itemize}

\noindent\textbf{Acknowledgement.} I would like to thank Jean Lafont for pointing me to this question and many conversations.

\section{Proof of Theorem \ref{maintheorem}}

\subsection{Reduction of the map}

Denote $\Gamma_1:=\pi_1(S_1)$, $\Gamma_2:=\pi_1(S_2)$, and $\Gamma:=\Gamma_1\times\Gamma_2$.

\begin{lemma} Let $\psi\in\aut(\Gamma)$. Then $\psi^2=\psi_1\times \psi_2$, where $\psi_i\in\aut(\Gamma_i),\ i=1,2$.
\end{lemma}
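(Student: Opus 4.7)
The plan is to show that $\psi$ either preserves the pair of factor subgroups $\{\Gamma_1 \times \{1\},\, \{1\} \times \Gamma_2\}$---in which case $\psi$ is itself a product of automorphisms---or swaps them, in which case $\psi^2$ is. First I would write $\psi(g, 1) = (\alpha(g), \beta(g))$ and $\psi(1, h) = (\gamma(h), \delta(h))$ for homomorphisms $\alpha\colon\Gamma_1\to\Gamma_1$, $\beta\colon\Gamma_1\to\Gamma_2$, $\gamma\colon\Gamma_2\to\Gamma_1$, $\delta\colon\Gamma_2\to\Gamma_2$. Since $(g,1)$ and $(1,h)$ commute in $\Gamma$, applying $\psi$ shows that $\alpha(\Gamma_1)$ commutes elementwise with $\gamma(\Gamma_2)$ inside $\Gamma_1$, and $\beta(\Gamma_1)$ with $\delta(\Gamma_2)$ inside $\Gamma_2$.

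The key inputs will be two standard facts about the closed hyperbolic surface group $\Gamma_i$ (genus $\geq 2$): (a) every non-trivial normal subgroup of $\Gamma_i$ contains a rank-$2$ free subgroup (via a ping-pong argument on independent conjugates of any non-trivial element); and (b) since $\Gamma_i$ is torsion-free hyperbolic, all of its abelian subgroups are cyclic, so the centralizer of any non-abelian subgroup is trivial. Combined, (a) and (b) imply that any two non-trivial normal subgroups of $\Gamma_i$ have non-trivial intersection (otherwise their commutator would vanish and one would centralize the other). Since $\psi$ is injective, $\ker\alpha\cap\ker\beta=\{1\}$, which forces one of $\alpha,\beta$ to be injective; symmetrically one of $\gamma,\delta$ is injective.

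I would then split into cases. If $\alpha$ is injective, then $\alpha(\Gamma_1)$ is non-abelian, so by (b) its centralizer is trivial and hence $\gamma$ is trivial. Surjectivity of $\psi$ then promotes $\alpha$ and $\delta$ to automorphisms, and since $\delta(\Gamma_2)=\Gamma_2$ must centralize $\beta(\Gamma_1)$, centerlessness of $\Gamma_2$ gives $\beta$ trivial as well; thus $\psi=\alpha\times\delta$ is already a product. If instead $\beta$ is the injective one, the parallel argument yields $\alpha$ and $\delta$ trivial, while $\beta\colon\Gamma_1\to\Gamma_2$ and $\gamma\colon\Gamma_2\to\Gamma_1$ both become isomorphisms. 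Then $\psi(g,h)=(\gamma(h),\beta(g))$, and so
$$\psi^2(g,h)=(\gamma\beta(g),\,\beta\gamma(h)),$$
with $\gamma\beta\in\aut(\Gamma_1)$ and $\beta\gamma\in\aut(\Gamma_2)$, which has the desired product form.

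The main obstacle is justifying fact (a); this is a classical result for non-elementary torsion-free hyperbolic groups, and I would cite it rather than reprove it. Note that the swap case is only possible when $\Gamma_1\cong\Gamma_2$, i.e.\ when $S_1$ and $S_2$ have equal genus; when the genera differ, the argument collapses to the first case and $\psi$ itself is a product of automorphisms.
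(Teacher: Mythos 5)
Your proof is correct, but it follows a genuinely different route from the paper's. The paper works at the level of single elements: it computes $C_\Gamma\big((\id,h)\big)\cong\Gamma_1\times\Z$ for $h\neq\id$, invokes the Farb--Margalit fact that centralizers of non-trivial elements of a surface group are infinite cyclic, and uses that an automorphism carries centralizers to isomorphic centralizers to conclude that elements with exactly one trivial coordinate go to elements with exactly one trivial coordinate; hence $\psi$ preserves or swaps the factor subgroups $\Gamma_1\times\langle\id\rangle$ and $\langle\id\rangle\times\Gamma_2$, and $\psi^2$ preserves them. You instead decompose $\psi$ into the four component homomorphisms $\alpha,\beta,\gamma,\delta$, and kill the off-diagonal (or diagonal) pieces using two structural facts: non-trivial normal subgroups of a surface group are non-abelian (your fact (a), which is stronger than needed --- all you really use is that they are not cyclic), and centralizers of non-abelian subgroups are trivial (your fact (b), which itself rests on the same cyclic-centralizer property the paper cites, a link worth making explicit). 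Both arguments ultimately hinge on that one property of surface groups, but your version buys a completely explicit case analysis: it exhibits $\psi=\alpha\times\delta$ in the preserving case, $\psi^2=(\gamma\beta)\times(\beta\gamma)$ in the swapping case, and records that the swap can only occur when $\Gamma_1\cong\Gamma_2$ --- steps the paper leaves implicit after establishing that the factors are permuted. The paper's route is shorter and avoids importing the normal-subgroup fact, at the cost of a quick isomorphism-type comparison ($\Gamma_1\times\Z$ versus $\Z\times\Z$ versus $\Gamma$) that it does not spell out either.
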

\begin{proof} Let $(g,h)\in\Gamma$. The centralizer $C_\Gamma((g,h))=C_{\Gamma_1}(g)\times C_{\Gamma_2}(h)$. In a surface group $G$ (of a surface of genus $\geq 2$), if $G \ni x\neq\id$, then $C_G(x)=\Z$ (\cite{FarbMarg}, pg. 23); if $x=\id$, $C_G(x)=G$.

For any $\psi\in\aut(\Gamma)$, we have \begin{tikzcd}[column sep=scriptsize] C_\Gamma(x)\rar["\psi"',"\sim"] & C_\Gamma(\psi(x)) \end{tikzcd}. 

Now if $g=\id$ but $h\neq\id$, $C_\Gamma((g,h))\simeq \Gamma_1\times\Z\simeq C_\Gamma(\psi(g,h))$. So $\psi(g,h)=(\id, h')$ where $h'\neq\id$. This means $\psi(\langle\id\rangle\times\Gamma_2)=\langle\id\rangle\times\Gamma_2$ or $\Gamma_1\times \langle\id\rangle$ if $\Gamma_1\simeq \Gamma_2$. Similarly $\psi(\Gamma_1\times \langle\id\rangle)=\Gamma_1\times \langle\id\rangle$ or $\langle\id\rangle\times\Gamma_2$.
\end{proof}

Again let $G:M\to M$ where $M=S_1\times S_2$, a product of hyperbolic surfaces, be any homeomorphism. Then there exist $f_1\in\mcg(S_1)$ and $f_2\in\mcg(S_2)$ such that $G\simeq f_1\times f_2$.

\subsection{Handel for product of surfaces}

Suppose $G:M\to M$ is a transitive Anosov diffeomorphism and it is homotopic to $f_1\times f_2$ where $f_i:S_i\to S_i,\ i=1,2$ are surface diffeomorphisms. It is easy to see that $f_1$ and $f_2$ cannot be both periodic as mapping classes, by the existence of the Ruelle-Sullivan classes \cite{ruellesullivan} or the growth rate of the periodic points \cite{smale}, but it is not very clear if both of the factors have to be pseudo-Anosov.

We have the following theorem.

\begin{theorem}\label{main-handel} Let $f_1$, $f_2$ be two pseudo-Anosov diffeomorphisms of closed hyperbolic surfaces. Suppose $G: M\to M$ is any map that is homotopic to $F:=f_1\times f_2$ where $M$ is the product of the two surfaces. Then there exists a closed subset $Y\subseteq M$ and a continuous surjective map $\varphi: Y\to M$, homotopic to the inclusion, such that $\varphi\circ G|_Y=F\circ \varphi$.
\end{theorem}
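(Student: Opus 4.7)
The plan is to import Handel's shadowing on each surface factor and assemble the two shadowing maps into a single semiconjugacy. I first lift everything to the universal cover $\tilde{M}=\tilde{S}_1\times\tilde{S}_2=\mathbb{H}^2\times\mathbb{H}^2$ with the product metric, and write $\pi_i:\tilde{M}\to\tilde{S}_i$ for the coordinate projections. The homotopy $G\simeq F$ lifts equivariantly, so compatible lifts $\tilde{G}$ of $G$ and $\tilde{F}:=\tilde{f}_1\times\tilde{f}_2$ of $F$ satisfy a uniform bound $d_{\tilde{M}}(\tilde{G}(\tilde{z}),\tilde{F}(\tilde{z}))\leq C$ for all $\tilde{z}\in\tilde{M}$. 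Because $\pi_i$ is $1$-Lipschitz and $\pi_i\circ\tilde{F}=\tilde{f}_i\circ\pi_i$, the projected orbit
\[
z^{(i)}_n:=\pi_i(\tilde{G}^n(\tilde{z})),\qquad n\in\Z,
\]
satisfies $d_{\tilde{S}_i}(z^{(i)}_{n+1},\tilde{f}_i(z^{(i)}_n))\leq C$; that is, it is a $C$-pseudo-orbit for the lifted pseudo-Anosov $\tilde{f}_i$.

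The key input from Handel \cite{handel_1985} is that the lift of a pseudo-Anosov map has a unique-shadowing property: every pseudo-orbit of bounded error in $\mathbb{H}^2$ is shadowed at bounded distance by a unique $\tilde{f}_i$-orbit, and the shadowing point depends continuously on the pseudo-orbit. Applying this to each $i=1,2$ produces unique points $\tilde{y}_i(\tilde{z})\in\tilde{S}_i$, and I would set $\tilde{\varphi}(\tilde{z}):=(\tilde{y}_1(\tilde{z}),\tilde{y}_2(\tilde{z}))$. Uniqueness of shadowing immediately forces $\tilde{\varphi}\circ\tilde{G}=\tilde{F}\circ\tilde{\varphi}$, since the pseudo-orbit associated to $\tilde{G}(\tilde{z})$ is the $1$-shift of the one associated to $\tilde{z}$. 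Combined with the product factorization $G_\#=(f_1)_\#\times(f_2)_\#$ guaranteed by the reduction step, the same uniqueness yields $\tilde{\varphi}(\gamma\tilde{z})=\gamma\cdot\tilde{\varphi}(\tilde{z})$ for every $\gamma$ in the deck group $\Gamma_1\times\Gamma_2$, so $\tilde{\varphi}$ descends to a continuous map $\varphi:M\to M$ with $\varphi\circ G=F\circ\varphi$.

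The uniform bound $d_{\tilde{M}}(\tilde{\varphi}(\tilde{z}),\tilde{z})\leq K$ that falls out of the shadowing constants for $\tilde{f}_1$ and $\tilde{f}_2$ then shows, via the geodesic homotopy in the universal cover, that $\varphi$ is homotopic to the identity of $M$. Since $M$ is a closed orientable $4$-manifold, a self-map homotopic to the identity has degree $1$ and is therefore surjective; I may thus take $Y:=M$, which is trivially closed and $G$-invariant, and the required relation $\varphi\circ G|_Y=F\circ\varphi$ follows.

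The step I expect to be the main obstacle is the continuity of $\tilde{\varphi}$ together with the extraction of \emph{uniform} shadowing constants for the lifted pseudo-Anosov maps---precisely the work that Handel carries out in \cite{handel_1985} for a single surface. My plan is not to invoke his theorem as a black box applied to $F$ (which is not itself pseudo-Anosov on $M$), but rather to re-run his arguments factor by factor in $\tilde{S}_1$ and $\tilde{S}_2$. The fact that $\tilde{F}$ splits as a product is what makes this componentwise strategy work: it reduces the four-dimensional shadowing problem to two independent two-dimensional ones, whose outputs combine into the desired map on $M$.
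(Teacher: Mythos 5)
Your argument hinges on a lemma that is not in Handel and that you do not prove: that the lift of a pseudo-Anosov map to $\mathbb{H}^2$ shadows, at bounded distance and uniquely and continuously, \emph{every} pseudo-orbit with bounded errors. Handel's results in \cite{handel_1985} are of a different nature: using Nielsen fixed point theory (nonvanishing fixed point indices, density of periodic points, absence of periodic free homotopy classes) he shows that every orbit of the pseudo-Anosov map $f$ is globally shadowed by an orbit of the homotopic map $g$, and from this builds a semiconjugacy $\varphi\colon Y\to S$ defined only on the closed invariant set $Y$ of $g$-points that shadow something. Your claimed lemma is strictly stronger: applied to the pseudo-orbits $\pi_i(\widetilde{G}^n\tilde z)$ it would force $Y=M$ for \emph{every} map $G$ homotopic to $F$, a conclusion neither Handel nor the present paper asserts at this level of generality (here $Y=M$ is obtained only later, using that $G$ is a transitive Anosov diffeomorphism with global product structure). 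The contraction-mapping/telescoping proof of pseudo-orbit shadowing that works for linear Anosov maps on $\mathbb{R}^n$ does not transfer as stated, because the invariant foliations of a pseudo-Anosov map are singular and $\widetilde{D}_s,\widetilde{D}_u$ are only degenerate pseudo-metrics; establishing bounded shadowing of arbitrary pseudo-orbits (if it is true at all) is the entire difficulty, so it cannot be cited as "the key input from Handel." Relatedly, your surjectivity argument (degree one because $\varphi$ is globally defined and homotopic to the identity) collapses if the lemma fails, whereas the needed existence statement — every $F$-orbit is shadowed by some $G$-orbit — comes in the paper from the Nielsen-theoretic direction and is absent from your plan.

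The fallback of "re-running Handel factor by factor" also does not work as described, because $G$ induces no dynamics on the factors: only $G_{\#}$ splits as $(f_1)_{\#}\times(f_2)_{\#}$, while $\pi_i\circ\widetilde{G}$ depends on both coordinates and is not a self-map of $\widetilde{S}_i$. Handel's proofs require an actual map of the surface homotopic to $f_i$ (its periodic points, their Nielsen classes and fixed point indices), and there is no such induced map here. The paper sidesteps this by running Handel's argument directly on the $4$-manifold: it verifies the analogues (1')--(4') of the pseudo-Anosov properties for the product $F=f_1\times f_2$ (dense periodic points, no periodic free homotopy classes, nonzero fixed point indices, and a product equivariant metric $\widetilde{D}=\sqrt{\widetilde{D}_s^2+\widetilde{D}_u^2}$ with uniform expansion and contraction rates), then repeats the Nielsen-equivalence and uniform-shadowing lemmas for the pair $(F,G)$ on $M$, defines $Y$ as the set of globally shadowed $G$-points, and gets surjectivity of $\varphi$ from the existence of $G$-periodic points Nielsen-equivalent to the dense set of $F$-periodic points. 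To salvage your approach you would either have to prove the pseudo-orbit shadowing lemma for singular foliations, or abandon the componentwise strategy and work with $F$ on $M$ as the paper does.
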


We want to point out that the proof is directly generalized from \cite{handel_1985}. We include the detailed proof for completion in Appendix \ref{proofofhandel}.

\subsection{The contradiction}

Now in this subsection, we assume our Anosov diffeomorphsim $G$ has the following properties:
\begin{itemize}
  \item[(a)] $G$ is transitive, i.e., $\text{NW}(G)=M$, which implies periodic points are dense;
  \item[(b)] $G$ has the global product structure.
\end{itemize}
Furthermore, since $G$ is homotopic to $F=f_1\times f_2$, the product of two pseudo-Anosov diffeomorphisms, $G_\#$ does not fix any free homotopy class by property (2) of Appendix \ref{proofofhandel} neither.

\begin{remark} Note that an Anosov diffeomorphism which satisfies the above properties simulates the behavior of $F$, so we expect that we could run the argument of Handel and get an inverse of the $\varphi$ from Theorem \ref{main-handel}.
\end{remark}

\begin{lemma}[Lemma \ref{period} for Anosov]\label{periodicforanosov} (i) If $y_1,y_2$ are distinct fixed points of $G^n$ then $(G^n,y_1)$ and $(G^n,y_2)$ are not Nielsen equivalent. (ii) If $y$ is $G$-periodic with least period $n$, then there exists $x$ which is $F$-periodic with least period $n$ and such that $(F^n,x)$ is Nielsen equivalent to $(G^n,y)$.
\end{lemma}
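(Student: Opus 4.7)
The plan is to handle (i) and (ii) separately. Part (ii) is identical in statement to the corresponding half of the appendix's Lemma \ref{period}, whose proof only uses that $F$ is a product of pseudo-Anosov maps and that $G$ is homotopic to $F$; those hypotheses are in force, so I would simply invoke it. The substantive new content is part (i), where the pseudo-Anosov hypothesis used for (i) in the original must be replaced by the Anosov global product structure of $G$.

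For (i), I would lift to the universal cover $\tilde M = \tilde S_1 \times \tilde S_2$. Suppose for contradiction that $y_1 \neq y_2$ are Nielsen-equivalent fixed points of $G^n$. By the definition of Nielsen equivalence, one can choose lifts $\tilde y_1, \tilde y_2 \in \tilde M$ and a single lift $\tilde H$ of $G^n$ that fixes both. It therefore suffices to show that any lift of $G^n$ has at most one fixed point. The tool is that the Anosov splitting and the global product structure both lift through the covering: the lifted stable and unstable foliations $\tilde W^s$ and $\tilde W^u$ meet each pair of leaves in a single point, and $\tilde H$ contracts any stable leaf under forward iteration and any unstable leaf under backward iteration, since the covering is a local isometry for the lifted metric.

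Given this, let $\tilde z$ be the unique point of $\tilde W^u(\tilde y_1) \cap \tilde W^s(\tilde y_2)$. Both leaves are $\tilde H$-invariant because $\tilde y_1, \tilde y_2$ are fixed, so $\tilde H(\tilde z)$ lies in the same intersection and hence equals $\tilde z$. Forward iteration of $\tilde H$ along $\tilde W^s(\tilde y_2)$ drives $\tilde z$ to $\tilde y_2$, and backward iteration along $\tilde W^u(\tilde y_1)$ drives $\tilde z$ to $\tilde y_1$; since $\tilde z$ is itself fixed, both limits are $\tilde z$, forcing $\tilde y_1 = \tilde z = \tilde y_2$ and contradicting $y_1 \neq y_2$.

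The main obstacle I foresee is the careful justification that the global product structure and the uniform exponential contraction transfer to the universal cover in the strong form needed here, namely unique pairwise intersection of leaves of $\tilde W^s$ and $\tilde W^u$ together with convergence of every point on a stable leaf to its associated fixed point under forward iteration. These are routine consequences of the lifted metric making the covering a local isometry and of the global product structure being a global, rather than merely local, hypothesis, but they must be set up explicitly before the short fixed-point uniqueness argument above can run.
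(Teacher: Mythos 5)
Your part (i) is essentially the paper's own argument: pass to a lift $t^{-1}\widetilde G^n$ fixing both $\tilde y_1,\tilde y_2$, take the unique intersection point $\tilde z$ of $\widetilde W^u(\tilde y_1)$ and $\widetilde W^s(\tilde y_2)$ provided by the global product structure, observe it is fixed, and rule this out by leafwise contraction/expansion of the lifted dynamics; the paper states the last step only as ``which is impossible,'' and your elaboration of it is correct.

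Part (ii), however, has a genuine gap. The statement is \emph{not} identical to Lemma \ref{period}(ii): the roles of $F$ and $G$ are reversed. Lemma \ref{period}(ii) starts from an $F$-periodic point and produces a Nielsen-equivalent $G$-periodic point, and its proof hinges on property (3'), the nonvanishing of the fixed point index of fixed points of $F^n$, which is what allows Brown's Theorem 3 (p.~94) to be applied with $F^n$ as the map whose fixed point classes must persist under homotopy. Here you are given a $G$-periodic point $y$ and must produce an $F$-periodic shadow, so you cannot ``simply invoke'' that lemma; you need the index of the fixed points of $G^n$ to be nonzero. This is exactly where the Anosov hypothesis enters in the paper: hyperbolic fixed points of $G^n$ have index $\pm 1$, so Brown's theorem applied starting from $G^n$ yields a fixed point $x$ of $F^n$ Nielsen equivalent to $(G^n,y)$. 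Moreover, the least-period verification is not a verbatim quotation either: running the argument of Lemma \ref{period} with the roles swapped, one supposes $x$ has least period $m_1<n$, transfers the factorization of lifts from $F$ to $G$, and concludes that some lift of $G^{m_1}$ would fix $\tilde y$ unless a lift of an iterate of $G$ fixed two distinct points --- which is ruled out by your part (i) argument (the Anosov/global-product-structure analogue of (4')), not by (4') for $F$ as in the appendix. So part (ii) requires these two new ingredients, and your proposal supplies neither.
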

\begin{proof} Suppose $G^n$ fixes $y_1$ and $y_2$ and $(G^n,y_1)$ and $(G^n,y_2)$ are Nielsen equivalent, i.e., there exists lifts $\tilde y_1$, $\tilde y_2$ and $t\in\pi_1(M)$ such that $\widetilde G^n\tilde y_1=t\tilde y_1$ and $\widetilde G^n\tilde y_2=t\tilde y_2$. Then $t^{-1}\widetilde G^n$ fixes both $\tilde y_1$ and $\tilde y_2$. Suppose $\tilde z$ is the intersection of the unstable leaf of $\tilde y_1$ and stable leaf $\tilde y_2$. Then $\tilde z$ must also be fixed, which is impossible.

Now the fixed point index of $G$ can only be $\pm 1$. By Theorem 3, pg 94 of \cite{brown}, there exists a fixed point $x$ of $F^n$ such that $(F^n,x)$ is Nielsen equivalent to $(G^n,y)$. We can check that $n$ is the least period of $x$ , by the same argument as in the proof Lemma \ref{period}.
\end{proof}

\begin{remark}\label{bound} Suppose that a pair of foliations $\F_1,\F_2$ in $\widetilde M$ has both the global product structure and the local product structure. Then for any compact set $K\subseteq M$, there exists a constant $B_K$ such that
$$\sup_{x\in K}\{d_i(x,y):\ y\text{ is in the same leaf as $x$ of } \F_i\},\ \ \ i=1,2,$$
where $d_i$ denotes the distance between two points in the same leaf along the leaf of $\F_i$.
This is implicit in the proof of Theorem 1 of \cite{franks-tori}, or see Lemma 5.6 of \cite{my} for a detailed fact checking.
\end{remark}

\begin{corollary}[Theorem \ref{theorem1} (ii) for Anosov]\label{existenceshadow} For all $y\in M$, there exists an $x\in M$ such that $(F,x)\sim (G,y)$; if $y$ is $G$-periodic with least period $n$, then $x$ can be chosen to be $F$-periodic with least period $n$.
\end{corollary}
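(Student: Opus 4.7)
The second clause of the corollary is immediate from Lemma~\ref{periodicforanosov}(ii). For the first clause, the plan is to approximate arbitrary $y\in M$ by $G$-periodic points, apply the second clause to each approximation, and extract a limit in $\widetilde M$.

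Concretely, property~(a) (transitivity) gives density of $G$-periodic points in $M$, so choose $y_n\to y$ with each $y_n$ of least $G$-period $p_n$. Lemma~\ref{periodicforanosov}(ii) produces $F$-periodic shadows $x_n$ of the same least period, realized on lifts by a common deck translation $t_n\in\pi_1(M)$, i.e.\ $\widetilde G^{p_n}\tilde y_n=t_n\tilde y_n$ and $\widetilde F^{p_n}\tilde x_n=t_n\tilde x_n$. Since $G\simeq F$, we have $G_\#=F_\#$, and a direct equivariance calculation shows that the shadowing-distance sequence $k\mapsto d(\widetilde G^k\tilde y_n,\widetilde F^k\tilde x_n)$ is $p_n$-periodic in $k$. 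Combining this periodicity with Remark~\ref{bound} (applied both to the $G$-foliations, which have global product structure, and to the pseudo-Anosov foliations of $F$) together with the uniform homotopy displacement $\sup_{z\in\widetilde M}d(\widetilde G z,\widetilde F z)$, I expect to extract a single constant $C>0$ such that $d(\widetilde G^k\tilde y_n,\widetilde F^k\tilde x_n)\le C$ for every $k\in\Z$ and every $n$. Compactness of $M$, the bound $d(\tilde y_n,\tilde x_n)\le C$, and discreteness of the deck action then give, along a subsequence, $\tilde y_n\to\tilde y$ and $\tilde x_n\to\tilde x$; continuity of $\widetilde G^k$ and $\widetilde F^k$ transfers the bound to the limit and yields $(F,x)\sim(G,y)$, with $x\in M$ the image of $\tilde x$.

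The main obstacle is producing the uniform constant $C$ that works for every periodic pair $(y_n,x_n)$ simultaneously. The periodicity trick reduces an infinite-time bound to a single-period bound, but as $p_n\to\infty$ one must still control the intermediate distances uniformly. The global product structure of $G$ is the engine here: each intermediate shadow can be characterized as the unique intersection of a $G$-stable and a $G$-unstable leaf, whose sizes along any compact fundamental domain are controlled by Remark~\ref{bound}. This is structurally the same mechanism that powers the construction of $\varphi$ in Appendix~\ref{proofofhandel}, with the roles of the two dynamical systems swapped: there the pseudo-Anosov structure of $F$ produces a shadow for each $G$-orbit in $Y$, and here the Anosov structure of $G$ produces a shadow for each $G$-orbit in all of $M$. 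A minor bookkeeping issue, the coherent choice of lifts $\tilde x_n$ so that they accumulate on a single $\tilde x$, is forced by the bound $d(\tilde y_n,\tilde x_n)\le C$ together with cocompactness of the deck action.
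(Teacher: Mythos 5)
Your skeleton is the right one and matches the paper's: density of $G$-periodic points gives $y_n\to y$, Lemma~\ref{periodicforanosov}(ii) produces $F$-periodic shadows $x_n$, and one extracts a subsequential limit. The periodic clause is indeed immediate from Lemma~\ref{periodicforanosov}(ii), as you say.

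The gap is in how you propose to get the uniform constant. You flag this as ``the main obstacle'' and try to build the bound from the global product structure of $G$, Remark~\ref{bound}, and the $p_n$-periodicity of the shadowing distance. This mechanism does not close: Remark~\ref{bound} bounds leaf-distances to intersection points of stable/unstable leaves within a compact set, which is the tool used later for injectivity, not for bounding the distance between a $G$-orbit and a homotopic $F$-orbit. The paper's proof does not touch the global product structure of $G$ at this point at all. It simply observes that Lemma~\ref{uniformbound} (Lemma 2.2 of Handel) remains valid verbatim, because its proof depends only on property~(4$'$) of $F$ --- the uniform expansion/contraction of the pseudo-Anosov product --- and not on any hypothesis on $G$. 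Lemma~\ref{uniformbound} already delivers exactly the two things you are trying to re-derive by hand: a single constant $K=K(G)$ with $(F,x)\sim(G,y)\Leftrightarrow(F,x)\sim^{K}(G,y)$, and the closure-under-limits statement that $(F,x_n)\sim(G,y_n)$ with $x_n\to x,\ y_n\to y$ implies $(F,x)\sim(G,y)$. Once you see this, your subsequential-limit paragraph collapses to one line, and the cocompactness/discreteness bookkeeping you mention is already built into the proof of Lemma~\ref{uniformbound}. Reread the appendix with an eye to which hypotheses each lemma actually uses: Lemmas~\ref{nielsenequiv} and \ref{uniformbound} are about $F$ alone, and are available off the shelf here.
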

\begin{proof} Lemmas \ref{nielsenequiv} and \ref{uniformbound} are still true because we are working with $F$. Note that since Nielsen equivalence and $K$-global shadowing are symmetric, because they are equivalence relations, for our fixed $F,G$, we can simply apply the statements for $F$.

Now because the periodic points of $G$ are also dense, for any $y\in M$, there exists sequence of periodic points $y_n$ that approaches $y$ and each of which is globally shadowed by a periodic point $x_n$ of $F$, by Lemma \ref{periodicforanosov}. Then we can choose a convergent subsequence of $x_n$. The limit point globally shadows $y$, by Lemma \ref{uniformbound}. It has least period $n$ follows the same argument as in Lemma \ref{period} (see also Remark \ref{nielglob}).
\end{proof}

\begin{theorem} The $\varphi$ defined in Theorem \ref{main-handel} is a homeomorphism.
\end{theorem}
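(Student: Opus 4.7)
The plan is to reduce the statement to injectivity of $\varphi$: since $Y$ is closed in the compact manifold $M$, it is itself compact, so any continuous bijection $\varphi\colon Y\to M$ onto the Hausdorff space $M$ is automatically a homeomorphism. Surjectivity is given by Theorem \ref{main-handel}, so only injectivity remains.

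To prove injectivity, suppose $\varphi(y_1)=\varphi(y_2)=x$, with the aim of showing $y_1=y_2$. Since $\varphi$ is homotopic to the inclusion $Y\hookrightarrow M$, there exist lifts $\widetilde\varphi\colon\widetilde Y\to\widetilde M$ and compatible lifts $\widetilde G,\widetilde F$ of $G,F$ satisfying $\widetilde\varphi\circ\widetilde G=\widetilde F\circ\widetilde\varphi$, together with a uniform bound $d(\widetilde\varphi(\tilde y),\tilde y)\leq K$. Choose lifts $\tilde y_1,\tilde y_2$ with $\widetilde\varphi(\tilde y_i)=\tilde x$; iterating the semi-conjugacy gives $d(\widetilde G^n\tilde y_i,\widetilde F^n\tilde x)\leq K$ for every $n\in\Z$, and the triangle inequality then yields $d(\widetilde G^n\tilde y_1,\widetilde G^n\tilde y_2)\leq 2K$ for all $n\in\Z$. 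Thus the two $G$-orbits globally shadow one another in $\widetilde M$, which is exactly the non-periodic analogue of the hypothesis of Lemma \ref{periodicforanosov}(i).

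Next, I would invoke the global product structure of $G$: let $\tilde z$ be the unique intersection of the unstable leaf through $\tilde y_1$ with the stable leaf through $\tilde y_2$. For $n\geq 0$ the stable-leaf distance $d^s(\widetilde G^n\tilde z,\widetilde G^n\tilde y_2)$ contracts exponentially, so $d(\widetilde G^n\tilde z,\widetilde G^n\tilde y_2)$ stays small; combined with the previous bound, $d(\widetilde G^n\tilde y_1,\widetilde G^n\tilde z)$ is bounded uniformly in $n\geq 0$. Since $\widetilde G^n\tilde y_1$ and $\widetilde G^n\tilde z$ lie on a common unstable leaf, Remark \ref{bound} converts this bounded ambient distance into a bound on the unstable-leaf distance $d^u(\widetilde G^n\tilde y_1,\widetilde G^n\tilde z)$ uniform in $n$. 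This contradicts Anosov expansion $d^u(\widetilde G^n\tilde y_1,\widetilde G^n\tilde z)\geq C^{-1}\lambda^{-n}d^u(\tilde y_1,\tilde z)$ unless $\tilde y_1=\tilde z$. The mirror argument taking $n\to-\infty$ along the stable leaf gives $\tilde y_2=\tilde z$, whence $\tilde y_1=\tilde y_2$ and therefore $y_1=y_2$.

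The main obstacle is the careful application of Remark \ref{bound} in the universal cover: one must know that bounded ambient distance between two leaf-connected points of $\widetilde M$ actually forces the leaf distance to be bounded as well, which is a global consequence of the combined local and global product structure and is the step where the global product structure hypothesis on $G$ is really used. Modulo this conversion, the argument is essentially a dynamical extension of Lemma \ref{periodicforanosov}(i) from periodic to arbitrary points, transported across the semi-conjugacy $\varphi$.
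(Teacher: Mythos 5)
Your proof is correct and reaches the same contradiction as the paper, but the route to the crucial bound on the unstable-leaf distance $d^u(\widetilde G^n\tilde y_1,\widetilde G^n\tilde z)$ is genuinely different and somewhat more elementary. The paper first establishes $\tilde\varphi(\tilde z)=\tilde\varphi(\tilde y_1)=\tilde\varphi(\tilde y_2)$, invoking the fact that $\tilde\varphi$ carries local stable (resp.\ unstable) discs of $\widetilde G$ into those of $\widetilde F$ together with the unique intersection of $\widetilde F$-leaves; it then uses properness of $\tilde\varphi$ so that the iterates $\widetilde G^n\tilde y_1$ and $\widetilde G^n\tilde z$ always lie in the same translate of the compact set $\tilde\varphi^{-1}(\text{fundamental domain})$, and only then applies Remark~\ref{bound}. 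You bypass both of those auxiliary facts: from the lifted semi-conjugacy and the uniform shadowing constant you get $d(\widetilde G^n\tilde y_1,\widetilde G^n\tilde y_2)\leq 2K$ for all $n$, and the stable contraction between $\tilde y_2$ and $\tilde z$ bounds $d(\widetilde G^n\tilde y_1,\widetilde G^n\tilde z)$ for $n\geq 0$ without ever needing to know that $\tilde\varphi$ is leaf-preserving or proper. The conversion from bounded ambient distance to bounded leaf distance via Remark~\ref{bound} is then applied to a metric ball around a fundamental domain rather than to a $\tilde\varphi$-preimage; you correctly flag this as the genuine place where the combined local and global product structure enters. A further small improvement over the paper's write-up is that you handle the degenerate case $\tilde y_1=\tilde z$ explicitly by the mirror argument along the stable leaf, whereas the paper draws its contradiction only from the unstable estimate in~\eqref{iterate} and leaves that case implicit.
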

\begin{proof} Recall that in the proof of Theorem \ref{main-handel} (Theorem \ref{handelprodsurfaces}), we have defined
$$Y=\{y\in M:\ \text{there exists } x\in M \text{ which globally shadows } y\},$$
and $\varphi(y)=x$ where $x$ globally shadows $y$.
By Corollary \ref{existenceshadow}, $Y=M$. We only need to show that $\varphi$ is injective.

Assume there are $y_1\neq y_2$ such that $\varphi(y_1)=\varphi(y_2)$. Then there are lifts $\tilde y_1\neq\tilde y_2$ such that $\tilde\varphi(\tilde y_1)=\tilde\varphi(\tilde y_2)$. 

Consider the unique intersection $\tilde z$ of the unstable leaf $u_1$ of $\tilde y_1$ and the stable leaf $s_2$ of $\tilde y_2$. Let $d_u$ and $d_s$ denote the distances between two points along the unstable and stable leaves respectively. We know that
\begin{align}\label{iterate}
d_u(\widetilde G^k\tilde y_1,\widetilde G^k\tilde z)\geq C\mu^{-k} d_u(\tilde y_1,\tilde z),\ \ \text{and}\\
d_s(\widetilde G^{-k}\tilde y_2,\widetilde G^{-k}\tilde z)\geq C\mu^{-k} d_s(\tilde y_2,\tilde z),
\end{align}
where $\mu\in (0,1)$ and $k>0$. 

From the topology of the measured foliations of the pseudo-Anosov maps, we know that each pair of stable and unstable leaves also intersect only once. Also, $\tilde\varphi$ maps local stable and unstable discs of $\widetilde G$ to the local stable and unstable discs of $\widetilde F$ respectively. Thus $\tilde\varphi (\tilde z)=\tilde\varphi(\tilde y_1)=\tilde\varphi(\tilde y_2)$. In addition, any iterates of $\tilde y_1,\tilde y_2$ under $\widetilde G$ are also mapped to the same point, because $\tilde\varphi(\widetilde G^n\tilde y_i)=\widetilde F^n(\tilde \varphi\tilde y_i), i=1,2$, similarly for $\tilde z$.

On the other hand if we let $K$ denote the preimage of a fundamental domain. It is compact because $\varphi$ is proper. There is an upperbound $B_K>0$ depending on $K$ such that
$$\sup_{x\in K}\big\{d_u(x,y):\ y\in u(x;\widetilde G)\big\}\leq B_K.$$
(See Remark \ref{bound}.) But now (\ref{iterate}) tells us that the distance between $\widetilde {G}^n\tilde y_1$ and $\widetilde G^n\tilde z$ along the leaf becomes unbounded. A contradiction.
\end{proof}

%\begin{discuss} \begin{itemize}
%    \item[1.] 
%  \end{itemize}
%\end{discuss}

\appendix

\section{Proof of Handel for a product of surfaces}\label{proofofhandel}

Let $f_1:S_1\rightarrow S_1$, $f_2:S_2\rightarrow S_2$ be two pseudo-Anosov diffeomorphisms of closed surfaces $S_1$, $S_2$, denote $M:=S_1\times S_2$, and let $G:M\rightarrow M$ be any map that is homotopic to $F:=f_1\times f_2$.

We follow \cite{handel_1985} to prove the following theorem.

\begin{theorem}[Theorem 2 of \cite{handel_1985}]\label{handelprodsurfaces} There exists a closed $G$-invariant subset $Y\subseteq M$ and a continuous surjective map $\varphi:Y\to M$ that is homotopic to an inclusion, such that $\varphi\circ G|_Y=F\circ \varphi$.
\end{theorem}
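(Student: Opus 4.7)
The plan is to transcribe Handel's 1985 proof \cite{handel_1985} step by step, exploiting at each stage the fact that $F=f_1\times f_2$ decomposes through the two pseudo-Anosov factors. First I would fix compatible lifts $\widetilde G,\widetilde F$ on the universal cover $\widetilde M=\widetilde{S_1}\times\widetilde{S_2}$ chosen so that a lift $\widetilde H$ of the homotopy from $G$ to $F$ moves points uniformly bounded distance (possible because $G\simeq F$ on the compact $M$). I would then define $K$-global shadowing: $x$ globally shadows $y$ with constant $K$ if there are lifts $\tilde x,\tilde y$ with $d(\widetilde F^n\tilde x,\widetilde G^n\tilde y)\le K$ for every $n\in\Z$, and write $(F,x)\sim (G,y)$ if such a $K$ exists. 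Crucially, on $\widetilde M$ with the product metric, shadowing decomposes coordinatewise, so each of Handel's pseudo-Anosov lemmas can be applied to the factors.

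Next I would establish the periodic-point dictionary. Using that Nielsen equivalence is preserved under homotopy, I would show that if $x$ is an $F^n$-fixed point and $(F^n,x)$ is Nielsen equivalent to $(G^n,y)$, then $x$ globally shadows $y$ (this is Handel's key estimate and extends directly, since the stable/unstable foliations of $F$ on $\widetilde M$ are products of those of $f_1,f_2$, hence still contracted/expanded under iteration). Conversely, for each $G$-periodic $y$ of least period $n$ I would produce an $F$-periodic $x$ of the same least period in the same Nielsen class: the Lefschetz index of $F^n$ at any of its periodic orbits factors as a product of the indices of $f_i^n$, which are nonzero because each $f_i$ is pseudo-Anosov, and then Brown's theorem \cite{brown} gives a fixed point in any Nielsen class of nonzero index.

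Then I would prove uniqueness of the shadowing point. If $x,x'$ both $K$-shadow $y$, the lifts satisfy $d(\widetilde F^n\tilde x,\widetilde F^n\tilde x')\le 2K$ for all $n$; projecting to each coordinate gives bounded orbits of $\tilde f_i^n$ on $\widetilde{S_i}$, and the pseudo-Anosov version of Handel's Lemma 1 forces $\tilde x_i=\tilde x'_i$ in each factor. Coupling this with a uniform-bound lemma (the shadowing constant $K$ can be taken independent of the periodic orbit, by a compactness argument on fundamental domains), I can define
\[Y:=\{y\in M:y\text{ is globally shadowed by some }x\in M\},\qquad \varphi(y):=\text{that unique }x,\]
and extend $\varphi$ from the set of $G$-periodic points (which is dense since $G$ is transitive-to-be-assumed, but in this appendix lemma no dynamical hypothesis on $G$ is made, so I would instead close up by a limiting argument over a dense set of $G$-periodic Nielsen classes). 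The $G$-invariance of $Y$ and the identity $\varphi\circ G|_Y=F\circ\varphi$ are immediate from the defining inequality, continuity follows from the uniform bound, and homotopy to the inclusion comes from the bounded tracking on $\widetilde M$.

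The main obstacle is the uniqueness/bounded-tracking step. In Handel's original setting, the expansiveness of a single pseudo-Anosov map is what forces two bounded-distance orbits to coincide; for $F=f_1\times f_2$ the joint invariant foliations on $\widetilde M$ are a product pair and $F$ is not expansive in the strict sense, so one must carefully verify that boundedness on $\widetilde M$ descends to boundedness on each factor under the coordinate projection and that Handel's Lemma 1 applies separately to each $\tilde f_i$. Once this product-to-factor reduction is cleanly set up, the remainder of the argument is a faithful, line-by-line adaptation of \cite{handel_1985}; this is why the paper defers the details to the appendix but keeps the statement unchanged.
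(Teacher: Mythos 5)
Your plan is the paper's own: verify Handel's four hypotheses (dense periodic points, no $F_\#$-periodic free-homotopy classes, nonzero fixed-point indices, an expanding/contracting equivariant metric on $\widetilde M$) for the product $F=f_1\times f_2$, and then re-run Handel's argument verbatim with $F$ in place of a single pseudo-Anosov map. But one step is stated backwards and would not run as written. No dynamical hypothesis on $G$ is available (it is just a continuous map homotopic to $F$), so there is no ``dense set of $G$-periodic Nielsen classes'' to work over. The density you actually have is of $F$-periodic points --- this is property (1'), inherited from $f_1$ and $f_2$. For an $F$-periodic $x$ of least period $n$, the nonzero index (3') plus Brown's theorem (Thm.\ 3, p.\ 94 of \cite{brown}) produce a fixed point $y$ of $G^n$ Nielsen equivalent to $x$, and the argument of Lemma~\ref{period}, which uses (4'), forces $y$ to have least period exactly $n$. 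Surjectivity of $\varphi$ then comes from taking $F$-periodic $x_n\to x$, extracting the $y_n$ as above, passing to a subsequential limit $y$, and invoking the uniform shadowing constant of Lemma~\ref{uniformbound}.

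Two further small imprecisions. Your coordinate-projection trick for the uniqueness step $(F,x)\sim(F,x')\Rightarrow x=x'$ is valid, since both orbits are $\widetilde F$-orbits and $\widetilde D$ is a product metric; but the projection breaks down whenever $G$ enters, because $G$ is not a product map, so for the shadowing estimates with $G$ the paper works directly with the product metric and (4'), where the scaling constant is no longer a single $\lambda$ but satisfies $\lambda=\min\{\lambda_1,\lambda_2\}\le\widetilde D_u(\widetilde F\tilde x,\widetilde F\tilde y)/\widetilde D_u(\tilde x,\tilde y)\le\lambda'=\max\{\lambda_1,\lambda_2\}$; the two-sided bound is still enough for all of Handel's inequalities. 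Finally, the claim that $F$ fails to be expansive is not correct --- a product of expansive maps with a product metric is expansive --- the only subtlety is the loss of exact eigenvalue scaling, which is precisely what (4') records.
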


We have the following properties for a pseudo-Anosov homeomorphism $f:S\to S$, where $S$ is a closed surface.
\begin{itemize}
  \item[(1)] The periodic points of $f$ are dense;
  \item[(2)] The action induced by $f$ on the free homotopy classes of $S$ has no periodic orbits;
  \item[(3)] The fixed point index of a fixed point $x$ of $f^n$ is never $0$;
  \item[(4)] There exist $\lambda>1$ and an equivariant metric $\widetilde{D}$ on the universal cover $\widetilde{S}$ of $S$ such that $\widetilde{D}=\sqrt{\widetilde{D}_s^2+\widetilde{D}_u^2}$, where $\widetilde{D}_s:\widetilde{S}\times\widetilde{S}\rightarrow[0,\infty)$ and $\tilde{D}_u:\widetilde{S}\times\widetilde{S}\rightarrow[0,\infty)$ are equivariant functions satisfying:
  $$\widetilde{D}_u(\tilde{f}\tilde{x}_1,\tilde{f}\tilde{x}_2)=\lambda\widetilde{D}_u(\tilde{x}_1,\tilde{x}_2)\ \ \ \text{and}\ \ \ \widetilde{D}_s(\tilde{f}^{-1}\tilde{x}_1,\tilde{f}^{-1}\tilde{x}_2)=\lambda\widetilde{D}_s(\tilde{x}_1,\tilde{x}_2)$$
  for all $\tilde{x}_1,\tilde{x}_2\in\widetilde{S}$ and all lifts $\tilde{f}$ of $f$.
\end{itemize}

We now check the corresponding properties for $F$.

\begin{itemize} \item[(1')] The periodic points of $F$ are dense.
\end{itemize}
\begin{proof} Let $\per_1,\per_2,\per$ be sets of periodic points of $f_1,f_2$ and $F$, respectively. Then if $(x_1,x_2)\in\per_1\times\per_2$, there exist $n_1,n_2$ such that $f^{n_1}x_1=x_1$ and $f^{n_2}x_2=x_2$, so $F^{N}(x_1,x_2)=(x_1,x_2)$ for some $N$ and $\per_1\times\per_2\subseteq\per$. Since the periodic points of $f_1$ and $f_2$ are dense, $\overline{\per}_1=S_1$ and $\overline{\per}_2=S_2$. For any $(x,y)\in M$, any neighborhood $U\times V$ that contains $(x,y)$, there exist $x'\in U\cap\per_1, y'\in V\cap\per_2$, so $(x',y')\in U\times V\cap\per_1\times\per_2\subseteq\per$. Therefore $\overline{\per}\supset M$ and $\overline{\per}=M$.
\end{proof}

\begin{itemize} \item[(2')] The action induced by $F$ on the free homotopy classes of $M$ has no periodic orbits.
\end{itemize}
\begin{proof} We know that $\pi_1(M)=\pi_1(S_1)\times\pi_1(S_2)$ and $F_\#=(f_1)_\#\times (f_2)_\#$. Suppose $F_\#$ has a periodic orbit. There exist $\alpha\in \pi_1(S_1)$, $\beta\in\pi_1(S_2)$, and $n\in\N$ such that $\big((f_1)^n_\#\alpha,(f_2)^n_\#\beta\big)=(\alpha,\beta)$. Then $\alpha$ is periodic for $(f_1)_\#$ and $\beta$ periodic for $(f_2)_\#$, which contradicts to (2).
\end{proof}

\begin{itemize} \item[(3')] The fixed point index of a fixed point $(x,y)\in M$ of $F^n$ is never $0$.
\end{itemize}

\begin{proof} By \cite{brown}, Theorem 6, p. 60., the index of a product is the product of the indices.
\end{proof}

\begin{itemize} \item[(4')] Let $\widetilde{D}_1=\sqrt{\widetilde{D}_{1s}^2+\widetilde{D}_{1u}^2}, \widetilde{D}_2=\sqrt{\widetilde{D}_{2s}^2+\widetilde{D}_{2u}^2}$ denote the equivariant metrics on the universal covers $\widetilde{S}_1, \widetilde{S}_2$ respectively, which satisfy (4) above.

For $\tilde x= (\tilde{x}_1,\tilde{x}_2),\ \tilde y=(\tilde{y}_1,\tilde{y}_2)\in \widetilde{M}$, define the product metric on $\widetilde{M}$
$$\widetilde{D}_{s}\big((\tilde{x}_1,\tilde x_2),(\tilde{y}_1,\tilde{y}_2)\big)=\sqrt{\widetilde{D}_{1s}^2(\tilde{x}_1,\tilde{y}_1)+\widetilde{D}_{2s}^2(\tilde{x}_2,\tilde{y}_2)},$$
$$\widetilde{D}_{u}\big((\tilde{x}_1,\tilde{x}_2),(\tilde{y}_1,\tilde{y}_2)\big)=\sqrt{\widetilde{D}_{1u}^2(\tilde{x}_1,\tilde{y}_1)+\widetilde{D}_{2u}^2(\tilde{x}_2,\tilde{y}_2)},$$
and $\widetilde{D}=\sqrt{\widetilde{D}_s^2+\widetilde{D}_u^2}:\widetilde{M}\times\widetilde{M}\rightarrow[0,\infty)$. Then $\widetilde{D}$ is an equivariant metric on $\widetilde{M}$ because if $\alpha\in\pi_1(S_1)$ and $\beta\in\pi_1(S_2)$,
\begin{align*}
\widetilde{D}\big((\alpha,\beta)(\tilde{x}_1,\tilde{x}_2),(\alpha,\beta)(\tilde{y}_1,\tilde{y}_2)\big)&= \sqrt{\widetilde{D}_1^2(\alpha\tilde{x}_1,\alpha\tilde{y}_1)+\widetilde{D}_2^2(\beta\tilde{x}_2,\beta\tilde{y}_2)}\\
&=\sqrt{\widetilde{D}_1^2(\tilde{x}_1,\tilde{y}_1)+\widetilde{D}_2^2(\tilde{x}_2,\tilde{y}_2)}\\
&=\widetilde{D}\big((\tilde{x}_1,\tilde{x}_2),(\tilde{y}_1,\tilde{y}_2)\big).
\end{align*}

Suppose $\lambda_1$ and $\lambda_2$ are the constants from (4) such that 
$$\widetilde{D}_{1u}(\tilde{f}_1\tilde{x}_1,\tilde{f}_1\tilde{y}_1)=\lambda_1\widetilde{D}_{1u}(\tilde{x}_1,\tilde{y}_1),\ \ \text{and}\ \ \widetilde{D}_{2u}(\tilde{f}_2\tilde{x}_2,\tilde{f}_2\tilde{y}_2)=\lambda_2\widetilde{D}_{2u}(\tilde{x}_2,\tilde{y}_2).$$
Let $\lambda=\min\{\lambda_1,\lambda_2\}>1, \lambda'=\max\{\lambda_1,\lambda_2\}>1$. We have
\begin{align*}
\lambda\widetilde{D}_{u}\big((\tilde{x}_1,\tilde{x}_2),(\tilde{y}_1,\tilde{y}_2)\big)&\leq\widetilde{D}_u\big((\tilde{f}_1\times\tilde{f}_2)(\tilde{x}_1,\tilde{x}_2),(\tilde{f}_1\times\tilde{f}_2)(\tilde{y}_1,\tilde{y}_2)\big)\\
&=\sqrt{\widetilde{D}_{1u}^2(\tilde{f}_1\tilde{x}_1,\tilde{f}_1\tilde{y}_1)+\widetilde{D}_{2u}^2(\tilde{f}_2\tilde{x}_2,\tilde{f}_2\tilde{y}_2)}\\
&=\sqrt{\lambda_1^2\widetilde{D}_{1u}^2(\tilde{x}_1,\tilde{y}_1)+\lambda_2^2\widetilde{D}_{2u}^2(\tilde{x}_2,\tilde{y}_2)}\\
&\leq \lambda'\widetilde{D}_{u}\big((\tilde{x}_1,\tilde{x}_2),(\tilde{y}_1,\tilde{y}_2)\big).
\end{align*}
Similarly,
$\lambda\widetilde{D}_{s}(\tilde x,\tilde y)\leq\widetilde{D}_s(\widetilde{F}^{-1}\tilde x,\widetilde{F}^{-1}\tilde y) \leq\lambda'\widetilde{D}_{s}(\tilde x,\tilde y)$. \qed
\end{itemize}

Let $p:\widetilde{M}\to M$ denote the covering projection. We fix a lift $\widetilde{F}=\tilde{f}_1\times\tilde{f}_2:\widetilde{M}\to\widetilde{M}$ of $F$. Then there is a unique lift $\widetilde{G}:\widetilde{M}\rightarrow\widetilde{M}$, that is equivariantly homotopic to $\widetilde{F}$.

\begin{definition} The $f$-orbit of $x$ is $K$-globally shadowed by the $g$-orbit of $y$ if there are lifts $\tilde{x}$ of $x$ and $\tilde{y}$ of $y$ such that $\widetilde{D}(\tilde{f}^k\tilde{x},\tilde{g}^k\tilde{y})\leq K$ for all $k\in\Z$. We write $(f,x)\sim^K(g,y)$ or $(f,x)\sim(g,y)$ if the shadowing constant $K$ is not specified.
\end{definition}

\begin{definition} If $x$ is a fixed point of $f^n$ and $\tilde{x}$ is a lift of $x$, then $\tilde{f}^n\tilde{x}=s\tilde{x}$ for some covering translation $s$ of $\widetilde{M}$. Similarly, if $y$ is a fixed point of $g^n$ and $\tilde{y}$ is a lift of $y$, then $\tilde{g}^n\tilde{y}=t\tilde{y}$ for some covering translation $t$. We say that $(f^n,x)$ and $(g^n,y)$ are Nielsen equivalent if there exist $\tilde{x}$ and $\tilde{y}$ such that $s=t$.
\end{definition}

\begin{remark} The definition of Nielsen equivalence above is equivalent to ``$H$-related'' \cite{brown}, i.e., if $f$ is homotopic to $g$ by the homotopy $H:X\times I\to X$, and $x,y$ are fixed points of $f$ and $g$ respectively, there exists a path $C:I\to X$ such that $H(C(t),t)$ is homotopic to $C(t)$ relative to $x,y$.

Indeed, if there exists lifts $\tilde x, \tilde y$ such that $\tilde f^n\tilde x=s\tilde x$ and $\tilde g^n\tilde y=s\tilde y$ where $s\in\pi_1(X)$, then by path-connectedness of $\widetilde X$, take any path $C$ connecting $\tilde x,\tilde y$, $\widetilde H(C(t),t)$ is homotopic to $sC(t)$ which is just another lift of the same path, because $\widetilde X$ is simply-connected. Thus the projection of $C$ to $X$ is a path that we want.

Conversely, if there exists a path $C$ in $X$ such that $C(0)=f^nx=x$, $C(1)=g^ny=y$, and $H(C(t),t)$ is homotopic to $C$where $H$ is a homotopy between $f$ and $g$, then the lift of the homotopy between $H(C(t),t)$ and $C$ tells us that $\tilde f^n\tilde x=s\widetilde C(0)$ and $\tilde g^n\tilde y=s\widetilde C(1)$ because the fibres are descrete.
\end{remark}

For periodic points $x$ of $F$ and $y$ of $G$, both of period $n$, the fact that the orbit of $x$ is $K$-globally shadowed by the orbit of $y$ is equivalent to that $(F^n,x)$ and $(G^n,y)$ being Nielsen equivalent.

\begin{lemma}[Lemma 1.7 of \cite{handel_1985}]\label{nielsenequiv} If $x$ is a fixed point of $F^n$ and $y$ is a fixed point of $G^n$, then $(F^n,x)$ is Nielsen equivalent to $(G^n,y)$ if and only if $(F,x)\sim(G,y)$.
\end{lemma}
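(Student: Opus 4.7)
The plan is to handle the two directions separately, relying on the equivariance of $\widetilde D$ from property (4'), the identity $F_\# = G_\#$ (the lifts $\widetilde F, \widetilde G$ are equivariantly homotopic, hence intertwine the same $\pi_1$-action), and, for the converse, property (2) of pseudo-Anosov maps in each factor. Fix lifts $\tilde x, \tilde y$ with $\widetilde F^n \tilde x = s\tilde x$ and $\widetilde G^n \tilde y = t\tilde y$, and abbreviate $\Phi := F^n_\#$, $\phi := \mathrm{Ad}_{s^{-1}} \circ \Phi$, $u := s^{-1}t$.

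For the $(\Rightarrow)$ direction, Nielsen equivalence lets me choose the lifts with $s = t$. Set $A_k := \widetilde D(\widetilde F^k\tilde x, \widetilde G^k\tilde y)$. The identity $\widetilde F^{k+n}\tilde x = \widetilde F^k(s\tilde x) = F^k_\#(s)\widetilde F^k\tilde x$, together with the analogue $\widetilde G^{k+n}\tilde y = F^k_\#(s)\widetilde G^k\tilde y$ (valid because $G_\# = F_\#$), combines with the equivariance of $\widetilde D$ to yield $A_{k+n} = A_k$. Thus $A_k$ is $n$-periodic in $k$ and bounded by $K := \max_{0\le r<n} A_r$, giving $(F,x)\sim^K(G,y)$.

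For the $(\Leftarrow)$ direction, iterating the two fixed-point relations produces deck translations $\sigma_k, \tau_k$ with $\widetilde F^{kn}\tilde x = \sigma_k\tilde x$ and $\widetilde G^{kn}\tilde y = \tau_k\tilde y$; the recurrences $\sigma_{k+1} = \Phi(\sigma_k)s$ and $\tau_{k+1} = \Phi(\tau_k)t$ give the twisted recursion $\beta_{k+1} = \phi(\beta_k)\,u$ for $\beta_k := \sigma_k^{-1}\tau_k$, so $\beta_k = \phi^{k-1}(u)\cdots\phi(u)\,u$ (with $\beta_0 = \id$). Equivariance of $\widetilde D$ turns the shadowing hypothesis at times $kn$ into $\widetilde D(\tilde x, \beta_k\tilde y)\le K$ for all $k\in\Z$, so proper discontinuity of the $\Gamma$-action on $\widetilde M$ forces $\{\beta_k\}$ to be a finite subset of $\Gamma$. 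Since $\phi$ is an automorphism the recursion is deterministic in both directions, so pigeonhole upgrades to full $m$-periodicity $\beta_{k+m} = \beta_k$ for some $m>0$ and all $k\in\Z$. At $k=0$ this reads $\beta_m = \id$, and the telescoping identity $\beta_{k+m} = \phi^m(\beta_k)\beta_m$ then yields $\phi^m(\beta_k) = \beta_k$ for every $k$.

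The closing step is algebraic. Projecting to each factor $\Gamma_i$, a direct computation shows $\phi_i^m(g) = c_i^{-1}(f_i^{mn})_\#(g)\,c_i$ for an explicit $c_i\in\Gamma_i$ built from iterated $\Phi_i$-images of $s_i$, so the condition $\phi_i^m(g) = g$ says that $(f_i^{mn})_\#(g)$ is conjugate to $g$ in $\Gamma_i$; the free homotopy class $[g]$ is therefore $(f_i)_\#$-periodic, which by property (2) forces $[g] = [\id]$ and hence $g = \id$. Applied to each factor, this gives $\beta_k = \id$ for all $k$; in particular $\beta_1 = s^{-1}t = \id$, so $s = t$, which is Nielsen equivalence. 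I expect the main obstacle to be this $(\Leftarrow)$ direction: the book-keeping of the twisted iterate $\beta_k$, its interaction with proper discontinuity, and the use of property (2) in each factor together show where the hypothesis that both factors are pseudo-Anosov genuinely powers the lemma.
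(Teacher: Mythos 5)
Your proof is correct and follows essentially the same route as the paper's: the forward direction is the same equivariance computation showing the distance sequence is $n$-periodic, and your converse (finiteness of the deck elements $\beta_k$ via boundedness of $\beta_k\tilde y$, periodicity of the twisted recursion, and the conclusion that the free homotopy class of $s^{-1}t$ is $F_\#$-periodic, hence trivial) is the paper's argument with slightly more explicit bookkeeping. Your factor-wise appeal to property (2) of each pseudo-Anosov $f_i$ simply re-derives the paper's product property (2'), which it invokes directly at that point.
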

\begin{proof} First suppose $(F^n,x)$ and $(G^n,y)$ are Nielsen equivalent, so there exist lifts $\tilde{x},\tilde{y}\in\widetilde{M}$ and a covering transformation $t$ such that $\widetilde{F}\tilde{x}=t\tilde{x}$ and $\widetilde{G}\tilde{y}=t\tilde{y}$. Then
\begin{align*}
\widetilde{D}(\widetilde{F}^k\tilde{x},\widetilde{G}^k\tilde{y})&=\widetilde{D}(\widetilde{F}^{k-n}\widetilde{F}^n\tilde{x},\widetilde{G}^{k-n}\widetilde{G}^n\tilde{y})\\
&=\widetilde{D}(\widetilde{F}^{k-n}t\tilde{x},\widetilde{G}^{k-n}t\tilde{y})\\
&=\widetilde{D}(\widetilde{F}^{k-n}t\widetilde{F}^{-(k-n)}\widetilde{F}^{k-n}\tilde{x},\widetilde{G}^{k-n}t\widetilde{G}^{-(k-n)}\widetilde{G}^{k-n}\tilde{y})\\
&=\widetilde{D}(t'\widetilde{F}^{k-n}\tilde{x},t'\widetilde{G}^{k-n}\tilde{y})\\
&=\widetilde{D}(\widetilde{F}^{k-n}\tilde{x},\widetilde{G}^{k-n}\tilde{y}),
\end{align*}
where $t'=\widetilde{F}^{k-n}t\widetilde{F}^{-(k-n)}=\widetilde{G}^{k-n}t\widetilde{G}^{-(k-n)}$ is another covering transformation, equal because $F\simeq G$. Thus $\widetilde{D}(\widetilde{F}^k\tilde{x},\widetilde{G}^k\tilde{y})$ takes on only finitely many values, namely, for $k=0,1,\ldots,n-1$, and it is bounded.

Conversely, if $(F,x)\sim(G,y)$, then there exist lifts $\tilde{x}$ of $x$, and $\tilde{y}$ of $y$ such that $\widetilde{D}(\widetilde{F}^k\tilde{x},\widetilde{G}^k\tilde{y})\leq K$ for some $K>0$ and for all $k\in\Z$. Suppose $\widetilde{F}^n\tilde{x}=s\tilde{x}$ and $\widetilde{G}^n\tilde{y}=t\tilde{y}$. Then $s^{-1}\widetilde{F}^n\tilde{x}=\tilde{x}$. Since $F\simeq G$, for any $\gamma\in\pi_1(M)$, $G_\#\gamma=F_\#\gamma$. Thus
$$\widetilde{D}(\tilde{x},(s^{-1}\widetilde{G}^n)^k\tilde{y})=\widetilde{D}((s^{-1}\widetilde{F}^n)^k\tilde{x},(s^{-1}\widetilde{G}^n)^k\tilde{y})=\widetilde{D}(\widetilde{F}^{nk}\tilde{x},\widetilde{G}^{nk}\tilde{y})\leq K,\ \ \text{for all}\ k\in\Z.$$

Any bounded subset of $\widetilde{M}$ intersects only finitely many lifts of $y$, and $(s^{-1}\widetilde{G}^n)^k\tilde{y}$ is just another lift of $y$. There exists an $N\in \N_{\geq 0}$ such that $(s^{-1}\widetilde{G}^n)^N\tilde{y}=\tilde{y}$. On the other hand $$(s^{-1}\widetilde{G}^n)^{N+1}\tilde{y}=(s^{-1}\widetilde{G}^n)\tilde{y}=s^{-1}t\tilde{y}=(s^{-1}\widetilde{G}^n)^N (s^{-1}t\tilde{y}).$$
So $$s^{-1}t(s^{-1}\widetilde{G}^n)^k\tilde{y}=s^{-1}t\tilde{y}=(s^{-1}\widetilde{G}^n)^ks^{-1}t\tilde{y}.$$
This implies that $s^{-1}t(s^{-1}\widetilde{G}^n)^k=(s^{-1}\widetilde{G}^n)^ks^{-1}t$, so $G^n_\#(s^{-1}t)=s^{-1}t$. Then by (2'), $s^{-1}t=1$, so $s=t$.
\end{proof}

\begin{remark} Note that only the second half of the proof relies on the fact that $F$ is a product of pseudo-Anosov diffeomorphisms, more precisely, poperty (2'). Thus we can claim that Nielsen equivalence always implies $K$-global shadowing.
\end{remark}

\begin{lemma}[Lemma 2.1 of \cite{handel_1985}]\label{period} (i) If $x_1$ and $x_2$ are distinct fixed points of $F^n$, then $(F^n, x_1)$ and $(F^n, x_2)$ are not Nielsen equivalent; (ii) If $x$ is $F$-periodic with least period $n$, then there exists $y$ which is $G$-periodic with least period $n$ and such that $(F^n,x)$ is Nielsen equivalent to $(G^n,y)$.
\end{lemma}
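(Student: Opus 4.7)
For part (i), I would exploit the product structure of $F=f_1\times f_2$ together with the hyperbolicity of each pseudo-Anosov factor from property (4). Suppose for contradiction that $x_1\neq x_2$ are Nielsen-equivalent fixed points of $F^n$. Choosing lifts $\tilde x_i=(\tilde a_i,\tilde b_i)\in\widetilde S_1\times\widetilde S_2$ and writing the covering translation $t=(\alpha,\beta)\in\Gamma_1\times\Gamma_2$ coordinatewise, the equation $\widetilde F^n\tilde x_i=t\tilde x_i$ decouples into $\tilde f_1^n\tilde a_i=\alpha\tilde a_i$ and $\tilde f_2^n\tilde b_i=\beta\tilde b_i$ for $i=1,2$. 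Thus $\alpha^{-1}\tilde f_1^n$ fixes both $\tilde a_1$ and $\tilde a_2$ in $\widetilde S_1$. By property (4) for the pseudo-Anosov $f_1$ (and the fact that $\alpha^{-1}$ is an isometry of each factor metric), this twisted lift expands $\widetilde D_{1u}$ by $\lambda_1^n$ and, via its inverse, expands $\widetilde D_{1s}$ by $\lambda_1^n$; both distances between the putative fixed points must then vanish, forcing $\tilde a_1=\tilde a_2$. The identical argument in the second factor gives $\tilde b_1=\tilde b_2$, contradicting $x_1\neq x_2$.

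For part (ii), existence of a matching $y$ is standard Nielsen fixed-point theory. By property (3'), the fixed-point index of $(F^n,x)$ equals the product of the nonzero factor indices and is therefore nonzero, so Theorem~3 on p.~94 of \cite{brown} supplies $y\in M$ with $G^ny=y$ and $(F^n,x)$ Nielsen equivalent to $(G^n,y)$.

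The core of (ii) is showing $y$ has least $G$-period exactly $n$. Assume toward contradiction that the least $G$-period $m$ satisfies $m\mid n$ and $m<n$. Pick compatible lifts with $\widetilde F^n\tilde x=t\tilde x$ and $\widetilde G^n\tilde y=t\tilde y$; equivariant homotopy of the chosen lifts gives $\widetilde G_{\#}=\widetilde F_{\#}$ on $\pi_1(M)$. For any $j$, the lift $\widetilde G^j\tilde y$ of $G^jy$ satisfies
\[\widetilde G^n(\widetilde G^j\tilde y)=\widetilde G^j(t\tilde y)=\widetilde F_{\#}^{j}(t)\cdot\widetilde G^j\tilde y,\]
so the Nielsen class of $G^jy$ as a fixed point of $G^n$ is $[\widetilde F_{\#}^{j}(t)]$. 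Taking $j=m$ and using $G^my=y$ yields $[\widetilde F_{\#}^{m}(t)]=[t]$. The identical computation for $F$ shows that $F^mx$ is a fixed point of $F^n$ whose Nielsen class is also $[\widetilde F_{\#}^{m}(t)]=[t]$, the same class as $x$. Part (i), just established, forces $F^mx=x$, contradicting the assumption that $n$ is the least $F$-period of $x$.

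The main obstacle is part (i): extracting the rigid conclusion that each twisted lift $\alpha^{-1}\tilde f_i^n$ admits at most one fixed point, which rests on the stable/unstable splitting of the equivariant metric in property (4). Once that rigidity is in place, part (ii) reduces to algebraic bookkeeping built on the equivariant identification $\widetilde G_{\#}=\widetilde F_{\#}$ and a direct appeal to (i). Property (2') is not needed in this lemma; its role is reserved for Lemma~\ref{nielsenequiv}.
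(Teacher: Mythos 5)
Your proof is correct, and it splits naturally into a part that matches the paper and a part that takes a genuinely different route.

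For part (i), your factor-by-factor argument (writing $t=(\alpha,\beta)$ and applying property (4) of each pseudo-Anosov $f_i$ to force $\tilde a_1=\tilde a_2$ and $\tilde b_1=\tilde b_2$) is essentially the paper's argument, just done coordinate-wise: the paper applies property (4') directly on $\widetilde M$ to conclude that no lift of an iterate of $F$ can fix two distinct points. Since (4') was built precisely by combining the factor metrics, your decomposition recovers the same rigidity with a little extra bookkeeping.

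For part (ii), the existence of $y$ via Brown's theorem and property (3') is identical. The least-period argument, however, takes a different route. The paper factors the lift fixing $\tilde y$ as $t^{-1}\widetilde G^n=(t_1\widetilde G^{m_1})^{m_2}$, transfers this to $(t_1\widetilde F^{m_1})^{m_2}=t^{-1}\widetilde F^n$ by uniqueness of the equivariantly-homotopic lift, and then argues that $t^{-1}\widetilde F^n$ fixes the entire $t_1\widetilde F^{m_1}$-orbit of $\tilde x$, invoking the fixed-point-free property of lifts of iterates of $F$ once more. You instead track how the Nielsen class evolves under the iteration: $(G^n,G^jy)$ has class $[F_\#^j(t)]$, so $G^my=y$ forces $[F_\#^m(t)]=[t]$; the same conjugacy identity shows $(F^n,F^mx)$ and $(F^n,x)$ are Nielsen equivalent, and you then invoke part (i) as a black box to conclude $F^mx=x$. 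This is cleaner in that it avoids the explicit construction of the intermediate lift $t_1\widetilde G^{m_1}$ and replaces a second application of the expansion property with an appeal to the already-established part (i), at the cost of needing the conjugacy-covariance formula for Nielsen classes. Both are valid; your version makes the logical dependency of (ii) on (i) explicit, whereas the paper's version re-derives the relevant rigidity inside the proof of (ii).

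Your closing remark about property (2') not being used is also accurate and agrees with the paper's own remark after this lemma.
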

\begin{proof} For (i), suppose $(F^n, x_1)$ and $(F^n, x_2)$ are Nielsen equivalent. There are lifts $\tilde{x}_1$ of $x_1$ and $\tilde{x}_2$ of $x_2$ such that $\widetilde{F}\tilde{x}_1=t\tilde{x}_1$ and $\widetilde{F}\tilde{x}_2=t\tilde{x}_2$. Then $t^{-1}\widetilde{F}\tilde{x}_1=\tilde{x}_1$ and $t^{-1}\widetilde{F}\tilde{x}_2=\tilde{x}_2$. $t^{-1}\widetilde{F}$ has to be a lift of $F$ that fixes both $\tilde{x}_1$ and $\tilde{x}_2$, but by (4'), there is no lift of any iterate of $F$ that can fix two distinct points. 

Now we prove (ii). Let $H:M\times I\rightarrow M$ be the homotopy such that $H(x,0)=F^n(x)$ and $H(x,1)=G^n(x)$. By (3'), the fixed point index of any fixed point of $F^n$ is never zero, so Theorem 3, p. 94 in \cite{brown} states that there exists a fixed point of $y$ that is Nielsen equivalent to $x$.

It is sufficient to show that $y$ has least period $n$.

Now fix lifts $\tilde{x},\tilde{y}$ such that $\widetilde{F}^n\tilde{x}=t\tilde{x}$ and $\widetilde{G}^n\tilde{y}=t\tilde{y}$, $t\in\pi_1(M)$, so $t^{-1}\widetilde{F}^n\tilde{x}=\tilde{x}$ and $t^{-1}\widetilde{G}^n\tilde{y}=\tilde{y}$, which implies that $t^{-1}\widetilde{G}^n$ is a lift of $G^n$ that fixes $\tilde{y}$. Suppose $y$ has least period $m_1<n$ and let $m_2=n/m_1>1$. There exist unique lifts $\widetilde{F}^n$ and $\widetilde G^n$ such that $\widetilde{F}^n\tilde{x}=\tilde{x}$ and $\widetilde G^n\tilde y=\tilde y$. We can find a $t_1\in\pi_1(M)$ such that $t^{-1}\widetilde{G}^n=(t_1\widetilde{G}^{m_1})^{m_2}$. Since $t_1\widetilde{F}^{m_1}$ is equivariantly homotopic to $t_1\widetilde{G}^{m_1}$ and by uniqueness of the lift, $t^{-1}\widetilde{F}^n=(t_1\widetilde{F}^{m_1})^{m_2}$. This implies, for any $k\in\Z$,
$$(t^{-1}\widetilde{F}^n)(t_1\widetilde{F}^{m_1})^k\tilde{x}=(t_1\widetilde{F}^{m_1})^{m_2+k}\tilde{x}=(t_1\widetilde{F}^{m_1})^kt^{-1}\widetilde{F}^n\tilde{x}=(t_1\widetilde{F}^{m_1})^k\tilde{x},$$
that is, $t^{-1}\widetilde{F}^n$ fixes the entire $t_1\widetilde{F}^{m_1}$ orbit of $\tilde{x}$. But we observed by (4') that no lift of an iterate of $F$ can fix two distinct points, so $t_1\widetilde{F}^{m_1}$ fixes $\tilde{x}$ and is equal to $t^{-1}\widetilde{F}^n$. Therefore $m_1=n$.
\end{proof}

\begin{remark} Here we need properties (3') and (4').
\end{remark}

The shadowing constant is independent of the chosen points by the following lemma.

\begin{lemma}[Lemma 2.2 of \cite{handel_1985}]\label{uniformbound} There exists $K=K(G)$ (dependent on $G$) such that $(F,x)\sim (G,y)$ if and only if $(F,x)\sim^K (G,y)$. In particular, if $x_n\to x,y_n\to y$, and $(F,x_n)\sim (G,y_n)$ then $(F,x)\sim (G,y)$.
\end{lemma}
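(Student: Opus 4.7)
The plan is to extract a uniform shadowing constant from the hyperbolic expansion rates in property (4') together with the equivariant homotopy between $\widetilde F$ and $\widetilde G$. First, I would set
\[
C := \sup_{\tilde z \in \widetilde M}\max\{\widetilde D(\widetilde F \tilde z, \widetilde G \tilde z),\ \widetilde D(\widetilde F^{-1}\tilde z, \widetilde G^{-1}\tilde z)\}.
\]
Because $F \simeq G$ and the lifts $\widetilde F,\widetilde G$ are equivariantly homotopic, they induce the same $\pi_1(M)$-action, so $\tilde z \mapsto \widetilde D(\widetilde F\tilde z, \widetilde G\tilde z)$ is a continuous $\pi_1(M)$-invariant function; it descends to the compact manifold $M$, making $C < \infty$. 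Crucially, $C$ depends only on $F$ and $G$, not on any shadowing pair.

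Next, assume $(F,x) \sim (G,y)$ with lifts $\tilde x,\tilde y$ and some \emph{a priori} constant $K'=K'(x,y)$. Combining the triangle inequality for $\widetilde D_u$ with property (4'),
\begin{align*}
\widetilde D_u(\widetilde F^{k+1}\tilde x,\widetilde G^{k+1}\tilde y)
&\geq \widetilde D_u(\widetilde F\widetilde F^k\tilde x,\widetilde F\widetilde G^k\tilde y) - \widetilde D_u(\widetilde F\widetilde G^k\tilde y,\widetilde G\widetilde G^k\tilde y)\\
&\geq \lambda\,\widetilde D_u(\widetilde F^k\tilde x,\widetilde G^k\tilde y) - C,
\end{align*}
since $\widetilde D_u \leq \widetilde D \leq C$ at the relevant point. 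If $\widetilde D_u(\widetilde F^{k_0}\tilde x,\widetilde G^{k_0}\tilde y) > C/(\lambda-1)$ for some $k_0$, the recursion forces $\widetilde D_u(\widetilde F^k\tilde x,\widetilde G^k\tilde y)\to \infty$ as $k\to+\infty$, contradicting $\widetilde D \leq K'$. Hence $\widetilde D_u(\widetilde F^k\tilde x,\widetilde G^k\tilde y) \leq C/(\lambda-1)$ for every $k$, and the symmetric argument using $\widetilde F^{-1},\widetilde G^{-1}$ run backward in $k$ gives the same bound for $\widetilde D_s$. Taking $K := \sqrt{2}\,C/(\lambda-1)$, which is independent of $(x,y)$, yields $(F,x)\sim^K (G,y)$.

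For the final assertion, suppose $x_n\to x$, $y_n \to y$, and $(F,x_n)\sim(G,y_n)$. The uniform bound just proved provides lifts $\tilde x_n,\tilde y_n$ with $\widetilde D(\widetilde F^k\tilde x_n,\widetilde G^k\tilde y_n)\leq K$ for all $k,n$. After translating by deck transformations I may assume the $\tilde x_n$ lie in a fixed compact fundamental domain, extract a convergent subsequence $\tilde x_n \to \tilde x$ with $p(\tilde x)=x$, then use the $k=0$ bound $\widetilde D(\tilde x_n,\tilde y_n)\leq K$ to see the $\tilde y_n$ remain bounded in $\widetilde M$, and extract a further subsequence with $\tilde y_n \to \tilde y$, $p(\tilde y)=y$. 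Continuity of $\widetilde F^k$ and $\widetilde G^k$ passes the bound to the limit, giving $(F,x)\sim^K (G,y)$.

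The main obstacle is the bootstrapping in the second step: one starts from a bound that may depend on $(x,y)$ and must upgrade it to a universal bound. The key feature making this work is that $\widetilde D$ dominates both $\widetilde D_u$ and $\widetilde D_s$, while $\widetilde F$ expands these components in opposite time directions at rate at least $\lambda$; the additive shift $C$ coming from $\widetilde F \ne \widetilde G$ cannot outrun the geometric blow-up, so any time-$k$ excess above $C/(\lambda-1)$ is detected by running time forward for the unstable component or backward for the stable.
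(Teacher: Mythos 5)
Your proof is correct and follows essentially the same strategy as the paper: define $C$ (the paper calls it $R$) as the sup of $\widetilde D(\widetilde F\tilde z,\widetilde G\tilde z)$ and its inverse, use the equivariance and compactness to make it finite, apply the same triangle inequality with (4') to obtain $\widetilde D_u(\widetilde F^{k+1}\tilde x,\widetilde G^{k+1}\tilde y)\geq\lambda\widetilde D_u(\widetilde F^k\tilde x,\widetilde G^k\tilde y)-C$, deduce a uniform threshold above which the orbit separation diverges, and then pass to limits via deck-translated subsequences for the second assertion. The only cosmetic difference is that you argue by geometric growth past the threshold $C/(\lambda-1)$ while the paper argues the distance increases by at least $1$ per step past $K/2=(R+1)/(\lambda-1)$; the two formulations are equivalent.
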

\begin{proof} First suppose $(F,x)\sim^K (G,y)$. There exists a shadowing constant $K$, such that the $F$-orbit of $x$ is $K$-globally shadowed by the $G$-orbit of $y$. Then by definition this means $(F,x)\sim (G,y)$.

Next suppose $(F,x)\sim (G,y)$. Let 
$$R=\max\bigg\{\sup_{\widetilde{x}\in\widetilde{M}}\widetilde{D}(\widetilde{F}\tilde{x},\widetilde{G}\tilde{x}),\ \sup_{\tilde{x}\in\widetilde{M}}\widetilde{D}(\widetilde{F}^{-1}\tilde{x},\widetilde{G}^{-1}\tilde{x})\bigg\}.$$
This maximum is reached and $R<\infty$ because $F$ is homotopic to $G$, $M$ is compact and the metric is equivariant.

(4') then implies that
$$\widetilde{D}_u(\widetilde{F}\tilde{x},\widetilde{G}\tilde{y})\geq\widetilde{D}_u(\widetilde{F}\tilde{x},\widetilde{F}\tilde{y})-\widetilde{D}_u(\widetilde{F}\tilde{y},\widetilde{G}\tilde{y})\geq\lambda\widetilde{D}_u(\tilde{x},\tilde{y})-R,$$
and similarly $\widetilde{D}_s(\widetilde{F}^{-1}\tilde{x},\widetilde{G}^{-1}\tilde{y})\geq\lambda\widetilde{D}_s(\tilde{x},\tilde{y})-R$.

Let $K=2(R+1)/(\lambda-1)$. If $\tilde{D}_u(\tilde{x},\tilde{y})>K/2$, then
$$\widetilde{D}_u(\widetilde{F}\tilde{x},\widetilde{G}\tilde{y})-\widetilde{D}_u(\tilde{x},\tilde{y})\geq(\lambda-1)\widetilde{D}_u(\tilde{x},\tilde{y})-R>1,$$
so $\widetilde{D}_u(\widetilde{F}\tilde{x},\widetilde{G}\tilde{y})>1+\widetilde{D}_u(\tilde{x},\tilde{y})$. If $\widetilde{D}_s(\tilde{x},\tilde{y})>K/2$, then $\widetilde{D}_s(\widetilde{F}^{-1}\tilde{x},\widetilde{G}^{-1}\tilde{y})>1+\widetilde{D}_s(\tilde{x},\tilde{y})$. This means if any of the distance between the iterates of $\tilde x,\tilde y$ exceeds $K/2$, the orbits cannot globally shadow each other, for any constant $K'$.

Therefore, if $(F,x)\sim (G,y)$, there must be lifts $\tilde{x}, \tilde{y}$ such that $\widetilde{D}_u(\widetilde{F}^k\tilde{x},\widetilde{G}^k\tilde{y})\leq K/2$ and $\widetilde{D}_s(\widetilde{F}^{k}\tilde{x},\widetilde{G}^{k}\tilde{y})\leq K/2$ for all $k\in\Z$. Then $\widetilde{D}(\widetilde{F}^{k}\tilde{x},\widetilde{G}^{k}\tilde{y})\leq K$ for all $k\in\Z$. We have found a uniform bound, namely $K$, for the shadowing constant. We can write $(F,x)\sim^K (G,y)$. This $K$ is independent of $x,y$.

Now we prove the second statement of the lemma. Suppose $x_n\to x,y_n\to y$, and for each $n$, fix lifts so $\widetilde{D}(\widetilde{F}^{k}\tilde{x}_n,\widetilde{G}^{k}\tilde{y}_n)\leq K$ for all $k$. We claim that there exist convergent subsequences $\{\tilde{x}_{n_j}\}$ and $\{\tilde{y}_{n_j}\}$ that converge to lifts $\tilde{x},\tilde y$ so $\widetilde D(\widetilde F^k\tilde x,\widetilde G^k\tilde y)\leq K$ for all $k$.

Take $U\subseteq M$, a neighborhood of $x$ that is evenly covered by the covering projection $p$. There is an $N\in\N$ such that for all $n\geq N$, $x_n\in U$. Take $\widetilde{U}\subseteq p^{-1}U$ to be the connected component that contains $\tilde{x}_N$, the lift that we picked as above. Then for each $\tilde{x}_n\in\{\tilde{x}_n\}_{n>N}$ there is a $t_n\in\pi_1(M)$ such that $t_n\tilde{x}_n\in\widetilde{U}$, so $\{t_n\tilde{x}_n\}_{n>N}$ converges to a lift $\tilde{x}$ of $x$. We want to show that $\{t_n\tilde{y}_n\}_{n>N}$ contains a convergent subsequence. For any $n>N$,
$$\widetilde{D}(t_{N+1}\tilde{y}_{N+1},t_n\tilde{y}_n)\leq\widetilde{D}(t_{N+1}\tilde{y}_{N+1},t_{N+1}\tilde{x}_{N+1})+\widetilde{D}(t_{N+1}\tilde{x}_{N+1},t_n\tilde{x}_n)+\widetilde{D}(t_n\tilde{x}_n,t_n\tilde{y}_n)\leq 2K+\varepsilon,$$
for a small $\varepsilon$, the diameter of $\widetilde{U}$, by the uniform boundedness we showed above and equivariance of the metric. Thus $\{t_n\tilde{y}_n\}_{n>N}$ is a bounded sequence. Therefore, there is a subsequence $\{t_{n_j}\tilde{y}_{n_j}\}$ that converges to some lift $\tilde{y}$ of $y$.

Now $$\widetilde{D}(\widetilde{F}^{k}\tilde{x},\widetilde{G}^{k}\tilde{y})\leq\widetilde{D}(\widetilde{F}^{k}\tilde{x},\widetilde{F}^{k}\tilde{x}_{n_j})+\widetilde{D}(\widetilde{F}^{k}\tilde{x}_{n_j},\widetilde{G}^{k}\tilde{y}_{n_j})+\widetilde{D}(\widetilde{G}^{k}\tilde{y}_{n_j},\widetilde{G}^{k}\tilde{y})\leq K+2\varepsilon,$$
for large enough $n_j$ and small $\varepsilon$. Thus $(F,x)\sim (G,y)$.
\end{proof}

\begin{remark} Here we need (4').
\end{remark}

\begin{theorem}[Theorem 1 of \cite{handel_1985}]\label{theorem1} (i) $(F,x_1)\sim (F,x_2)$ implies that $x_1=x_2$; (ii) For all $x\in M$, there exists $y\in M$ such that $(F,x)\sim (G,y)$; if $x$ is $F$-periodic with least period $n$, then $y$ can be chosen to be $G$-periodic with least period $n$.
\end{theorem}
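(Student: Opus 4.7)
The plan is to get (i) directly from the expansion established in property (4') and to get (ii) by combining density of $F$-periodic points with the uniform shadowing constant of Lemma \ref{uniformbound}.

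For (i), I would argue by contradiction. Suppose $x_1\neq x_2$ and $(F,x_1)\sim (F,x_2)$, so there exist lifts $\tilde x_1\neq \tilde x_2$ with $\widetilde D(\widetilde F^k\tilde x_1,\widetilde F^k\tilde x_2)$ bounded over all $k\in\Z$. Since $\widetilde D^{\,2}=\widetilde D_s^{\,2}+\widetilde D_u^{\,2}$ and $\widetilde D(\tilde x_1,\tilde x_2)>0$, at least one of $\widetilde D_u(\tilde x_1,\tilde x_2)$, $\widetilde D_s(\tilde x_1,\tilde x_2)$ is strictly positive. By the computation in (4'), the $\widetilde D_u$-distance is multiplied by at least $\lambda>1$ under $\widetilde F$ and the $\widetilde D_s$-distance is multiplied by at least $\lambda>1$ under $\widetilde F^{-1}$. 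Iterating in the appropriate direction forces $\widetilde D(\widetilde F^k\tilde x_1,\widetilde F^k\tilde x_2)\to\infty$, contradicting boundedness.

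For the periodic part of (ii), I would chain the earlier lemmas: if $x$ has least period $n$ under $F$, Lemma \ref{period}(ii) produces a $G$-periodic $y$ of least period $n$ with $(F^n,x)$ Nielsen equivalent to $(G^n,y)$, and Lemma \ref{nielsenequiv} then upgrades this to $(F,x)\sim (G,y)$. For a general $x\in M$, property (1') lets me pick a sequence $x_k\to x$ of $F$-periodic points; the previous sentence then produces $G$-periodic $y_k$ with $(F,x_k)\sim (G,y_k)$. By Lemma \ref{uniformbound}, a single constant $K=K(G)$ works uniformly for all these shadowings, so compactness of $M$ yields a convergent subsequence $y_{k_j}\to y$, and the second half of Lemma \ref{uniformbound} delivers $(F,x)\sim (G,y)$.

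The only delicate point I anticipate is ensuring that the lifts chosen for different $x_k$ and $y_k$ are compatible in the limit, but this is precisely what the second half of Lemma \ref{uniformbound} is tailored to handle. Once (i) and the periodic case of (ii) are in hand, everything else is bookkeeping.
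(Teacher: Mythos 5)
Your proof is correct and takes essentially the same approach as the paper: (i) is the direct expansion argument from property (4'), which the paper invokes tersely as ``By (4') we have (i),'' and (ii) chains Lemma \ref{period}, Lemma \ref{nielsenequiv}, density of $F$-periodic points from (1'), compactness, and the limiting half of Lemma \ref{uniformbound} in exactly the order the paper does. You are a bit more explicit in first handling the periodic case before passing to limits, but that is just a clearer presentation of the same argument.
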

\begin{proof} By (4') we have (i).

By (1') periodic points of $F$ are dense in $M$. For any $x\in M$, there exists a sequence $\{x_n\}$ of periodic points of $F$, each with least period $p_n$, such that $x_n\to x$. Then by Lemma \ref{period} (ii), for each $n$, there exists $y_n$ that is $G$-periodic with least period $p_n$ and $(F^{p_n},x_n)$ is Nielsen equivalent to $(G^{p_n},y_n)$. By Lemma \ref{nielsenequiv}, $(F,x_n)\sim(G,y_n)$ for all $n$. Since $M$ is compact, there is a subsequence $\{y_{n_k}\}\subseteq\{y_n\}$ such that $y_{n_k}\to y$ as $k\to\infty$, for some $y\in M$, so $(F, x_{n_k})\sim(G,y_{n_k})$ and $x_{n_k}\to x,\ y_{n_k}\to y$. By Lemma \ref{uniformbound}, we have $(F,x)\sim(G,y)$.
\end{proof}

\begin{remark}\label{nielglob} Note here we used the fact that Nielsen equivalence implies global shadowing to find points that shadow, which does not require property (2'). We need the other direction of the statement of Lemma \ref{nielsenequiv} to show that the point we found has the same least period. (4') causes more problem to generalize to reducible mapping classes.
\end{remark}

\begin{proof}[Proof of \ref{handelprodsurfaces}] Let $Y=\{y\in M: \exists x\in M\text{ such that } (F,x)\sim (G,y)\}$. For any $x_0\in M$, by Theorem \ref{theorem1} (ii), there exsits $y_0\in M$ such that $(F,x)\sim (G,y)$. Thus we can define a surjective map $\varphi:Y\rightarrow M$ by $\varphi(y_0)=x_0$. It is well-defined by Theorem \ref{theorem1} (i).

Next we show that $Y$ is closed. take a convergent sequence $\{y_n\}\subseteq Y$ such that $y_n\rightarrow y$ for some $y\in M$. For each $y_n$ there is $x_n$ such that $(F,x_n)\sim(G,y_n)$. Because $M$ is compact, there is a subsequence $\{x_{n_k}\}$ of $\{x_n\}$ such that $x_{n_k}\rightarrow x\in M$. Thus by Lemma \ref{uniformbound}, we have $(F,x)\sim (G,y)$ and so $y\in Y$ and $Y$ is closed.

By the above and how we define $\varphi$, if $\{y_n\}\subseteq Y$ is such a sequence that $y_n\rightarrow y\in Y$, then we have $x_n=\varphi(y_n)$ and $\varphi(y)=x$. Since $M$ is compact and every convergent subsequence converges to the same limit $x$, because $x_{n_k}\rightarrow x'\neq x$ implies $(F,x')\sim (G,y)$, $\{x_n\}$ also converges to $x$.

We define $\tilde{\varphi}(\tilde{y})=\tilde{x}$ for such lifts so we have $\widetilde{D}(\tilde{f}^k\tilde{x},\tilde{g}^k\tilde{y})\leq K$ for all $k\in\Z$. Then $\widetilde{D}(\tilde{\varphi}(\tilde{y}),\tilde{y})\leq K$ for $\tilde{y}\in\tilde{Y}$, so $\varphi$ is homotopic to the inclusion.

Finally, take $y\in Y$. Since $(F,x)\sim (G,y)$ implies $(F,F(x))\sim (G,G(y))$, $F\circ\varphi(y)=F(x)=\varphi(G(y))$. Therefore $F\circ\varphi=\varphi\circ G|_Y$.
\end{proof}

%\noindent \textbf{Acknowledgements.} 

\nocite{*}
\bibliographystyle{halpha}
\bibliography{bibfile}

%\printindex

%\clearpage

\end{document}